 \newtheorem{thm}{Theorem}[section]
 \newtheorem{cor}[thm]{Corollary}
 \newtheorem{prop}[thm]{Proposition}
 \theoremstyle{definition}
 \newtheorem{defn}[thm]{Definition}
 \theoremstyle{remark}
 \newtheorem{rem}[thm]{Remark}
 \numberwithin{equation}{section}
\begin{document}

%
%
%
%
%
%
%
%
%

\title[A Revisit to $n$-Normed Spaces through Its Quotient Spaces]
 {A Revisit to $n$-Normed Spaces through \\ Its Quotient Spaces}

\author[H. Batkunde]{Harmanus Batkunde$^*$}

\address{%
Analysis and Geometry Research Group,\\
Faculty of Mathematics and Natural Sciences,\\
Bandung Institute of Technology\\
Bandung 40132, Indonesia}

\email{(correspondence) h.batkunde@fmipa.unpatti.ac.id}

\author[H. Gunawan]{Hendra Gunawan}
\address{%
	Analysis and Geometry Research Group,\\
	Faculty of Mathematics and Natural Sciences,\\
	Bandung Institute of Technology\\
	Bandung 40132, Indonesia}
\email{hgunawan@math.itb.ac.id}
\subjclass{Primary 46B20; Secondary 54B15}

\keywords{continuous mappings, fixed points, $n$-normed spaces, quotient spaces.}

\date{March 29, 2019}

\begin{abstract}
In this paper, we study some features of $n$-normed spaces with respect to
norms of its quotient spaces. We define continuous functions with respect to the
norms of its quotient spaces and show that all types of continuity are equivalent.
We also study contractive mappings on $n$-normed spaces using the same approach.
In particular, we prove a fixed point theorem for contractive mappings on
a closed and bounded set in an $n$-normed space.
\end{abstract}

\maketitle
\section{Introduction}
We will study some features of the $n$-normed spaces with its quotient spaces as a tool.
In the 1960's, the concept of $n$-normed spaces was investigated by S. G\"ahler \cite{gah1,gah2,gah3,gah4}.
Let $n$ be a nonnegative integer and $X$ be a real vector space with $\dim(X) \geq n$.
The pair $\left(X,\|\cdot,\dots,\cdot\|\right)$ is called an $\bm{n}$\textbf{-normed space}
where $\|\cdot,\dots,\cdot\| : X^{n} \to \mathbb{R}$  is an \textbf{$\bm{n}$-norm} on $X$  which
satisfies the following conditions:
\begin{itemize}
	\item[(i)] $\|x_1,\dots,x_n\| \geq 0$; $\|x_1,\dots,x_n\| = 0$ if and only if $x_1,\dots,x_n$ linearly dependent.
	\item[(ii)] $\|x_1,\dots,x_n\|$ is invariant under permutation.
	\item[(iii)] $\|\alpha x_{1},\dots,x_{k}\|=|\alpha| \ \|x_{1},\dots,
	x_{k}\|$ for any $\alpha \in \mathbb{R}$.
	\item[(iv)] $\|x_1+x_1^\prime,x_{2},\dots,x_{k}\| \leq \|x_1,x_{2},
	\dots,x_{k}\| + \|x_1^\prime,x_{2},\dots,x_{k}\|$.
\end{itemize}

Some researchers studied further various aspects of these spaces \cite{bgp,br,gm,gsn,ht,ert,mei,SS}.
In particular, Brzd\c{e}k and Ciepli\'{n}ski \cite{bc} proved a fixed point theorem for operators acting
on some classes of functions, with values in $n$-Banach spaces.
Ekariani \textit{et al.} \cite{eka} also proved a fixed point theorem of a contractive mapping
in $\ell^{p}$ as an $n$-normed space. They used a norm which is equivalent to the usual norm on $\ell^{p}$.
The norm that they used is simpler than the normed used by Brzd\c{e}k and Ciepli\'{n}ski.
This norm is derived from the $n$-norm using a linearly independent set consisting of $n$ vectors.

In this paper, we provide a more general approach to prove a fixed point theorem in an $n$-normed space.
First, we define some quotient spaces of an $n$-normed space. In each quotient space we define a norm
which is derived from the $n$-norm in a certain way. This norm is simpler than the norm that Brzd\c{e}k
and Ciepli\'{n}ski or Ekariani \textit{et al.} used. Using these norms, we investigate continuous mappings
and contractive mappings in the $n$-normed space and prove a fixed point theorem for contractive
mappings on a closed and bounded set in the $n$-normed space. Using this approach we will also prove a fixed point
theorem in $\ell^{p}$ as an $n$-normed space. Compared to the approaches used by previous researchers \cite{gsn,ht,mei,bc,eka},
our approach provides a more general way which can be used to study properties of $n$-normed spaces.

\section{Preliminaries}
\label{Sec:2}

Let us begin with the construction of quotient spaces of an $n$-normed space.
Let $(X,\|\cdot,\dots,\cdot\|)$ be an $n$-normed space, $m\in \{1,\dots,n\}$ and $Y=\{y_{1},\dots,y_{n}\}$ be a
linearly independent set in $X$. For $\{i_{1},\dots,i_{m}\} \subset \{1,\dots,n\}$, consider $Y\setminus \{y_{i_{1}},\dots, y_{i_{m}}\}$.
We define the following subspace of $X$:
\[
Y_{i_{1},\dots, i_{m}}^{0}:= {\rm span} \, Y \setminus \{y_{i_{1}},\dots, y_{i_{m}}\} :=
\left\{\sum_{i\notin \{i_{1},\dots,i_{m}\}} \alpha_{i} y_{i} \, \, ;  \, \,\alpha_{i} \in
\mathbb{R}.\right\}.
\]
For each $u \in X$, the corresponding coset in $X$ is defined by
\[
\overline{u}:=\left\{u+\sum_{i \notin \{i_{1},\dots,i_{n}\}} \alpha_{i} y_{i}: \alpha_{i} \in \mathbb{R}\right\}.
\]
Then we have if $\overline{u} = \overline {v}$, then $u-v \in {\rm span} \, Y \setminus \{y_{i_{1}},\dots, y_{i_{m}}\}$.
We define the quotient space
\[ \label{e04}
X^{*}_{i_{1},\dots, i_{m}}:=X/Y_{i_{1},\dots, i_{m}}^{0}:=\{\overline{u}: u \in X\}.
\]
The addition and scalar multiplication apply in this space.
Moreover, we define the following norm on $X^{*}_{i_{1},\dots, i_{m}}$:
\begin{equation} \label{e05}
\begin{aligned}
\|\overline{u}\|^{*}_{i_{1},\dots, i_{m}}:=& \|u,y_{1},\dots,y_{i_{1}-1},y_{i_{1}+1},\dots,y_{n}\|+\dots \\
&+ \|u,y_{1},\dots,y_{i_{m}-1},y_{i_{m}+1},\dots,y_{n}\|.
\end{aligned}
\end{equation}
Using the above construction, we get $\binom{n}{m}$ quotient spaces. Moreover, for an $m \in \{1,\dots,n\}$,
the set that contains all quotient spaces constructed above is called \textbf{class-$\bm{m}$ collection}
\cite{bgn}.

Observe that each term of (\ref{e05}) is a norm of quotient spaces in class-$1$ collection.
Here (\ref{e05}) can be written as
$$
\|\overline{u}\|^{*}_{i_{1},\dots, i_{m}}=\|\overline{u}\|^{*}_{i_{1}}+\dots + \|\overline{u}\|^{*}_{i_{m}}.
$$
One can see that the norm we define in (\ref{e05}) is more simpler than the norm that Brzd\c{e}k
and Ciepli\'{n}ski used since we define it with respect to a linearly independent set consisting of $n$ vectors.

For an $m \in \{1,\dots,n\}$ we will use the phrase 'norms of class-$m$ collection' which means
'all norms of each quotient space in the class-$m$ collection'. Furthermore, using the norms of class-$m$
collection, we have observed some topology characteristics of an $n$-normed space as
presented in the following.

\begin{defn}
	\rm \cite{hh2} Let $(X,\|\cdot,\dots,\cdot\|)$ be an $n$-normed space and $m \in \{1,\dots,n\}$.
	We say a sequence $\{x_{k}\} \subset X$ \textbf{converges with respect to the norms of class-$\bm{m}$
		collection to $x$} if for any
	$\epsilon >0 $, there exists an $N\in \mathbb{N}$ such that for $k \geq N$ we have
	\[
	\|\overline{x_{k}-x}\|^{*}_{i_{1},\dots,i_{m}} < \epsilon,
	\]
	for every $\{i_{1},\dots,i_{m}\} \subset \{1,\dots,n\}$.
	In this case we also say
	\[\lim_{n \to \infty} \|\overline{x_{k}-x}\|^{*}_{i_{1},\dots,i_{m}} = 0,\]
	for every $\{i_{1},\dots,i_{m}\} \subset \{1,\dots,n\}$.
	If $\{x_{k}\}$ does not converge, we say it \textbf{diverges}.
\end{defn}

By the above definition, we have the following theorem.

\begin{thm}\label{convT}
	{\rm \cite{hh2}} Let  $(X,\|\cdot, \dots, \cdot\|)$ be an $n$-normed space and $m \in\{1, \dots,n\}$.
	A sequence $\{x_{k}\} \subset X$ is convergent with respect to the norms of class-$1$ collection
	if and only if it is convergent with respect to the norms of class-$m$ collection.
\end{thm}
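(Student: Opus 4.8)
The plan is to exploit the additive structure of the class-$m$ norms together with the nonnegativity of the $n$-norm, which together make the equivalence almost immediate. The only structural ingredient I would use is the identity recorded after (\ref{e05}): for any coset $\overline{u}$ and any $m$-subset $\{i_1,\dots,i_m\}\subset\{1,\dots,n\}$,
\[
\|\overline{u}\|^{*}_{i_1,\dots,i_m}=\|\overline{u}\|^{*}_{i_1}+\dots+\|\overline{u}\|^{*}_{i_m},
\]
a sum of finitely many nonnegative terms, each of which is a norm of the class-$1$ collection.

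For the forward implication I would assume $\{x_k\}$ converges to some $x$ with respect to the norms of class-$1$ collection, so $\|\overline{x_k-x}\|^{*}_i\to 0$ for every $i\in\{1,\dots,n\}$. Fixing an $m$-subset $\{i_1,\dots,i_m\}$ and $\epsilon>0$, for each $j$ I would choose $N_j$ with $\|\overline{x_k-x}\|^{*}_{i_j}<\epsilon/m$ for $k\ge N_j$, set $N:=\max\{N_1,\dots,N_m\}$, and add the $m$ resulting inequalities to obtain $\|\overline{x_k-x}\|^{*}_{i_1,\dots,i_m}<\epsilon$ for $k\ge N$. Since the $m$-subset is arbitrary, this is exactly convergence with respect to the norms of class-$m$ collection.

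For the converse I would assume $\{x_k\}$ converges to $x$ with respect to the norms of class-$m$ collection and fix $i\in\{1,\dots,n\}$. Since $1\le m\le n$, the index $i$ lies in at least one $m$-subset $S=\{i_1,\dots,i_m\}$, for which $\|\overline{x_k-x}\|^{*}_{i_1,\dots,i_m}\to 0$. Nonnegativity of the summands in (\ref{e05}) gives $0\le\|\overline{x_k-x}\|^{*}_i\le\|\overline{x_k-x}\|^{*}_{i_1,\dots,i_m}$, so $\|\overline{x_k-x}\|^{*}_i\to 0$ by the squeeze principle; as $i$ is arbitrary, this is convergence with respect to the norms of class-$1$ collection.

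I do not expect any genuine obstacle here: everything rests on decomposing a class-$m$ norm into a finite sum of nonnegative class-$1$ norms, together with the elementary remark that, when $1\le m\le n$, each of the $n$ indices is covered by some $m$-element subset. The only place asking for a bit of care is the $\epsilon$-$N$ bookkeeping in the forward direction, where the tolerance $\epsilon$ has to be shared among the $m$ terms — and even that is routine.
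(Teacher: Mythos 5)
Your argument is correct. Note, though, that the paper does not actually prove Theorem \ref{convT} --- it is quoted from \cite{hh2} without proof --- so there is no in-paper argument to compare against; what you propose is exactly the technique the paper deploys for the analogous statements (the $\epsilon/m$ splitting in the proof of Theorem \ref{c1m} and the term-by-term summation of the decomposition $\|\overline{u}\|^{*}_{i_{1},\dots,i_{m}}=\|\overline{u}\|^{*}_{i_{1}}+\dots+\|\overline{u}\|^{*}_{i_{m}}$ in the proof of Theorem \ref{lm}), so it is certainly the intended route. One cosmetic point: in the forward direction your $N$ a priori depends on the chosen $m$-subset, whereas the definition asks for a single $N$ valid for all subsets; this is harmless because there are only $\binom{n}{m}$ of them (or, more directly, because the class-$1$ definition already supplies an $N$ uniform in $i$).
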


\begin{cor}
	Let $(X,\|\cdot, \dots, \cdot\|)$ be an $n$-normed space and $m_{1},m_{2} \in \{1,\dots,n\}$.
	Then, if a sequence in $X$ converges with respect to the norms class-$m_{1}$ collection, then it
	is also converges with respect to the norms of class-$m_{2}$ collection.
\end{cor}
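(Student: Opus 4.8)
The plan is to route the implication through the class-$1$ collection, using Theorem \ref{convT} as a bridge in both directions. Suppose $\{x_k\} \subset X$ converges with respect to the norms of the class-$m_1$ collection to some $x \in X$. Applying Theorem \ref{convT} with $m = m_1$ gives that $\{x_k\}$ converges with respect to the norms of the class-$1$ collection to $x$. Applying Theorem \ref{convT} once more, this time with $m = m_2$, yields that $\{x_k\}$ converges with respect to the norms of the class-$m_2$ collection to the same limit $x$. This is exactly the asserted conclusion, so essentially no further work is needed beyond a careful citation of the theorem in each direction.

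If one prefers a self-contained argument that does not invoke Theorem \ref{convT}, I would unwind the definition of $\|\cdot\|^{*}_{i_{1},\dots,i_{m}}$ as the sum $\|\overline{\,\cdot\,}\|^{*}_{i_{1}} + \dots + \|\overline{\,\cdot\,}\|^{*}_{i_{m}}$ of $m$ nonnegative class-$1$ norms, as recorded right after (\ref{e05}). Convergence with respect to the norms of the class-$m$ collection then forces each summand, hence each class-$1$ norm $\|\overline{x_k - x}\|^{*}_{i}$ with $i \in \{1,\dots,n\}$, to tend to $0$; here one uses that every index $i$ occurs in at least one $m$-subset, which is valid precisely because $1 \leq m \leq n$. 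Conversely, a finite sum of $m$ null sequences is null, recovering class-$m$ convergence. Thus convergence with respect to the norms of the class-$1$ collection is the common denominator of all the classes, and the corollary follows by transitivity.

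The main (and essentially only) point requiring care is the indexing bookkeeping: one must verify that the family of single-index norms appearing across all $m$-subsets of $\{1,\dots,n\}$ is exactly the full class-$1$ collection, which again rests on $1 \leq m \leq n$. Beyond that, all estimates are immediate from nonnegativity of the $n$-norm, so there is no substantive obstacle here — the statement is genuinely a corollary rather than a result needing a fresh argument.
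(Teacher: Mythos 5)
Your proof is correct and is exactly the argument the paper intends: the corollary follows by applying the equivalence of Theorem \ref{convT} twice, first with $m=m_1$ to pass to the class-$1$ collection and then with $m=m_2$ to pass back. The paper leaves this unwritten, so your two-line bridging argument (and the optional self-contained unwinding) matches the intended route.
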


\begin{defn}
	{\rm \cite{hh2}} Let $(X,\|\cdot,\dots,\cdot\|)$ be an $n$-normed space and $m \in \{1,\dots,n\}$. 
	A sequence $\{x_{k}\} \subset X$ is called a \textbf{Cauchy sequence with respect to the norms of class-$\bm{m}$ 
		collection} if for any $\epsilon >0$, there exists an $N\in \mathbb{N}$ such that, for every $k,l \geq N$, 
	we have
	\[
	\|\overline{x_{k}-x_{l}}\|^{*}_{i_{1},\dots,i_{m}}< \epsilon,
	\]
	for every  $\{i_{1},\dots,i_{m}\} \subset \{1,\dots,n\}$. In other words
	\[
	\lim_{k,l \to \infty} \|\overline{x_{k}-x_{l}}\|^{*}_{i_{1},\dots,i_{m}}=0,
	\]
	for every  $\{i_{1},\dots,i_{m}\} \subset \{1,\dots,n\}$.
\end{defn}

\begin{thm}\label{cauT} {\rm \cite{hh2}}
	Let $(X,\|\cdot,\dots,\cdot\|)$ be an $n$-normed space and $m \in \{1,\dots,n\} $.
	If $\{x_{k}\}$ is convergent with respect to the norms of class-$1$ collection,
	then $\{x_{k}\}$ is Cauchy with respect to the norms of class-$m$ collection.
\end{thm}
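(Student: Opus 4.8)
The plan is to reduce the statement to the ordinary triangle inequality in each quotient space, after first upgrading the hypothesis by means of Theorem \ref{convT}.

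First I would observe that, since $\{x_{k}\}$ converges with respect to the norms of class-$1$ collection, Theorem \ref{convT} guarantees that it also converges with respect to the norms of class-$m$ collection; denote the limit by $x$. Hence, given $\epsilon>0$, there exists $N\in\mathbb{N}$ such that
\[
\|\overline{x_{k}-x}\|^{*}_{i_{1},\dots,i_{m}} < \frac{\epsilon}{2}
\]
for all $k\geq N$ and every $\{i_{1},\dots,i_{m}\}\subset\{1,\dots,n\}$.

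Next, fix a subset $\{i_{1},\dots,i_{m}\}$ and use the fact that the coset map $u\mapsto\overline{u}$ is linear, so that $\overline{x_{k}-x_{l}} = \overline{x_{k}-x} + \overline{x-x_{l}}$ in $X^{*}_{i_{1},\dots,i_{m}}$. Since $\|\cdot\|^{*}_{i_{1},\dots,i_{m}}$ is a norm (being the sum $\|\cdot\|^{*}_{i_{1}}+\dots+\|\cdot\|^{*}_{i_{m}}$ of class-$1$ norms, each of which inherits the triangle inequality from property (iv) of the $n$-norm), the triangle inequality gives, for all $k,l\geq N$,
\[
\|\overline{x_{k}-x_{l}}\|^{*}_{i_{1},\dots,i_{m}} \leq \|\overline{x_{k}-x}\|^{*}_{i_{1},\dots,i_{m}} + \|\overline{x-x_{l}}\|^{*}_{i_{1},\dots,i_{m}} < \frac{\epsilon}{2}+\frac{\epsilon}{2} = \epsilon.
\]
Because $N$ depends only on $\epsilon$ and this estimate holds simultaneously for every $\{i_{1},\dots,i_{m}\}\subset\{1,\dots,n\}$, this is precisely the assertion that $\{x_{k}\}$ is Cauchy with respect to the norms of class-$m$ collection.

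The argument is largely routine; the only step needing care is the appeal to Theorem \ref{convT}, which is what lets us manufacture a genuine class-$m$ limit out of a class-$1$ hypothesis and then run the standard ``convergent implies Cauchy'' estimate in all $\binom{n}{m}$ quotient spaces at once. I do not expect any genuine obstacle beyond bookkeeping.
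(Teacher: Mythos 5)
Your proof is correct. The paper itself does not reprove Theorem \ref{cauT} (it is quoted from \cite{hh2}), but your argument is the standard and expected one: upgrade the hypothesis to class-$m$ convergence via Theorem \ref{convT}, then apply the triangle inequality for the quotient norms $\|\cdot\|^{*}_{i_{1},\dots,i_{m}}$ (which holds termwise by property (iv) of the $n$-norm) uniformly over all $\binom{n}{m}$ index sets.
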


\begin{cor}\label{ceq}
	Let $(X,\|\cdot, \dots, \cdot\|)$ be an $n$-normed space and $m_{1},m_{2} \in \{1,\dots,n\}$.
	Then, if a sequence in $X$ is a Cauchy sequence with respect to the norms class-$m_{1}$ collection, then it
	is also a Cauchy sequence with respect to the norms of class-$m_{2}$ collection.
\end{cor}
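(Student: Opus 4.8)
\emph{Proof proposal.} The plan is to reduce everything to the class-$1$ collection. I would first prove the auxiliary claim that, for each $m\in\{1,\dots,n\}$, a sequence is Cauchy with respect to the norms of class-$m$ collection if and only if it is Cauchy with respect to the norms of class-$1$ collection; the corollary then follows at once by applying this equivalence with $m=m_1$ and with $m=m_2$. The only ingredient needed is the decomposition recorded just after (\ref{e05}), namely $\|\overline{u}\|^{*}_{i_1,\dots,i_m}=\|\overline{u}\|^{*}_{i_1}+\dots+\|\overline{u}\|^{*}_{i_m}$, together with the fact that each class-$1$ norm is nonnegative, being an $n$-norm evaluated on a fixed list of vectors.

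For the implication from class-$m$ to class-$1$, I would fix $i\in\{1,\dots,n\}$, take $\epsilon>0$, and let $N$ be as in the definition of a Cauchy sequence with respect to the norms of class-$m$ collection, so that $\|\overline{x_k-x_l}\|^{*}_{j_1,\dots,j_m}<\epsilon$ for all $k,l\ge N$ and every $m$-subset $\{j_1,\dots,j_m\}$. Choosing such a subset that contains $i$ and discarding the other (nonnegative) summands gives $\|\overline{x_k-x_l}\|^{*}_{i}\le\|\overline{x_k-x_l}\|^{*}_{j_1,\dots,j_m}<\epsilon$ for all $k,l\ge N$; since $i$ is arbitrary, $\{x_k\}$ is Cauchy with respect to the norms of class-$1$ collection.

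For the converse, suppose $\{x_k\}$ is Cauchy with respect to the norms of class-$1$ collection and let $\epsilon>0$. For each $i\in\{1,\dots,n\}$ choose $N_i$ with $\|\overline{x_k-x_l}\|^{*}_{i}<\epsilon/m$ whenever $k,l\ge N_i$, and put $N=\max\{N_1,\dots,N_n\}$, a maximum of finitely many integers. Then for every $m$-subset $\{i_1,\dots,i_m\}$ and all $k,l\ge N$,
\[
\|\overline{x_k-x_l}\|^{*}_{i_1,\dots,i_m}=\sum_{j=1}^{m}\|\overline{x_k-x_l}\|^{*}_{i_j}<m\cdot\frac{\epsilon}{m}=\epsilon,
\]
so $\{x_k\}$ is Cauchy with respect to the norms of class-$m$ collection. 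Taking $m=m_1$ and $m=m_2$ in the resulting equivalence completes the argument.

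I do not anticipate a real obstacle: the proof is pure bookkeeping built on the sum decomposition of the class-$m$ norms, the nonnegativity of $n$-norms, and the finiteness of the index set $\{1,\dots,n\}$, which lets a single $N$ work uniformly over all class-$1$ norms and all $m$-subsets simultaneously. I would also remark that replacing $x_k-x_l$ by $x_k-x$ throughout reproves Theorem \ref{convT} and its Corollary, so Corollary \ref{ceq} is simply the Cauchy counterpart obtained by the identical mechanism.
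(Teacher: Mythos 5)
Your proposal is correct, and it is essentially the intended argument: the paper states this corollary without proof (deferring to Theorem \ref{cauT} and the reference \cite{hh2}), and your reduction to the class-$1$ collection via the decomposition $\|\overline{u}\|^{*}_{i_{1},\dots,i_{m}}=\|\overline{u}\|^{*}_{i_{1}}+\dots+\|\overline{u}\|^{*}_{i_{m}}$, nonnegativity of each summand, and the $\epsilon/m$ bookkeeping is exactly the mechanism the paper itself uses for the analogous equivalences for continuity and contractivity (Theorems \ref{c1m} and \ref{lm}). No gaps.
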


\begin{rem}
	For $m_{1},m_{2} \in \{1,\dots,n\}$, we find that all types of convergent sequence with respect to the norms of class-$m_{1}$
	collection and to the norms of class-$m_{2}$ colection are equivalent. The equivalence also applies to all
	types of Cauchy sequence with respect to the norms of class-$m_{1}$ collection and to the norms of class-$m_{2}$ collection.
	In this regard, we may simply use the word 'converges or Cauchy' instead of 'converges or Cauchy with respect to the norms of
	class-$m$ collection'. Furthermore if every Cauchy sequence in $X$ converges, then $X$ is \textbf{complete}.
	By the word 'complete', we mean 'complete with respect to the norms of class-$m$ collection', for some $m \in \{1,\dots,n\}$.
\end{rem}

Now let us move to the definition of closed sets with respect to the norms of class-$m$ collection,
for any  $m \in \{1,\dots,n\}$.

\begin{defn}
	{ \rm \cite{hh2}} Let $(X,\|\cdot,\dots,\cdot\|)$ be an $n$-normed space and $K \subseteq X$.
	The set $K$ is called \textbf{closed} if for any sequence $\{x_{k}\}$ in $K$ that converges
	in $X$, its limit belongs to $K$.
\end{defn}

Note that by saying 'closed' we mean 'closed with respect to class-$m$ collection', for some $m \in \{1,\dots,n\}$.
Next is the definition of bounded sets.

\begin{defn}
	{\rm \cite{hh2}} Let $(X,\|\cdot, \dots, \cdot\|)$ be an $n$-normed space,  $m \in \{1,\dots,n\}$
	and $K \subseteq X$ be a nonempty set. The set $K$ is called \textbf{bounded with respect to the norms of
		class-$\bm{m}$ collection} if and only if for any $x \in K$ there exists an $M>0$ such that
	\[
	\|\overline{x}\|^{*}_{i_{1},\dots,i_{m}}\leq M,
	\]
	for every $\{i_{1},\dots,i_{m}\} \subset \{1,\dots,n\}$.
\end{defn}

We also have the following theorem which says that all types of bounded set are equivalent.

\begin{thm} \label{t1.5}
	{\rm \cite{hh2}} Let  $(X,\|\cdot, \dots, \cdot\|)$ is an $n$-normed space, $m \in \{1,\dots,n\}$
	and $K \subset X$ nonempty. The set $K$ is bounded with respect
	to the norms of class-$1$ collection if and only if it is bounded with respect to class-$m$ collection.
\end{thm}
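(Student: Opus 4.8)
The plan is to reduce everything to the additive decomposition
\[
\|\overline{x}\|^{*}_{i_{1},\dots, i_{m}}=\|\overline{x}\|^{*}_{i_{1}}+\dots + \|\overline{x}\|^{*}_{i_{m}}
\]
noted right after (\ref{e05}), which writes every class-$m$ norm of a coset as a sum of $m$ class-$1$ norms of the same coset. Given this identity, both implications become elementary comparisons between a finite sum of nonnegative real numbers and its individual summands, and no genuinely topological input is required.

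First I would show that boundedness with respect to the norms of class-$1$ collection implies boundedness with respect to the norms of class-$m$ collection. Let $x\in K$ and choose $M>0$ with $\|\overline{x}\|^{*}_{i}\le M$ for all $i\in\{1,\dots,n\}$. Then for every $\{i_{1},\dots,i_{m}\}\subset\{1,\dots,n\}$ the decomposition above gives
\[
\|\overline{x}\|^{*}_{i_{1},\dots, i_{m}}=\sum_{j=1}^{m}\|\overline{x}\|^{*}_{i_{j}}\le mM\le nM,
\]
so the single constant $M':=nM$ serves for all of the $\binom{n}{m}$ index sets at once, and $K$ is bounded with respect to the norms of class-$m$ collection.

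Conversely, assume $K$ is bounded with respect to the norms of class-$m$ collection, let $x\in K$, and pick $M>0$ with $\|\overline{x}\|^{*}_{i_{1},\dots, i_{m}}\le M$ for every $\{i_{1},\dots,i_{m}\}\subset\{1,\dots,n\}$. Fix $i\in\{1,\dots,n\}$ and enlarge $\{i\}$ to an $m$-element set $\{i_{1},\dots,i_{m}\}$ with $i_{1}=i$ (possible since $1\le m\le n$). Because every class-$1$ norm is nonnegative by condition (i) of the $n$-norm, each summand is dominated by the sum, whence
\[
\|\overline{x}\|^{*}_{i}=\|\overline{x}\|^{*}_{i_{1}}\le\|\overline{x}\|^{*}_{i_{1}}+\dots+\|\overline{x}\|^{*}_{i_{m}}=\|\overline{x}\|^{*}_{i_{1},\dots, i_{m}}\le M.
\]
Thus $\|\overline{x}\|^{*}_{i}\le M$ for every $i$, and $K$ is bounded with respect to the norms of class-$1$ collection.

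The proof is essentially routine; the only point deserving attention is the bookkeeping with quantifiers, namely that a single bound can be taken uniformly over all $\binom{n}{m}$ index sets, which is immediate here since the class-$m$ norm of a fixed coset is a finite sum of the finitely many class-$1$ norms of that coset. Running the same two estimates one after the other also yields the analogous equivalence between boundedness with respect to the norms of class-$m_{1}$ collection and class-$m_{2}$ collection for arbitrary $m_{1},m_{2}\in\{1,\dots,n\}$.
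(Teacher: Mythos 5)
Your argument is correct. Note that the paper itself states this theorem without proof, citing it from an earlier work, so there is no in-paper proof to compare against; but your route --- using the additive decomposition $\|\overline{x}\|^{*}_{i_{1},\dots,i_{m}}=\|\overline{x}\|^{*}_{i_{1}}+\dots+\|\overline{x}\|^{*}_{i_{m}}$ recorded after (\ref{e05}), bounding the sum by $m$ times the largest class-$1$ term in one direction and dominating each nonnegative summand by the sum in the other --- is exactly the argument the paper's setup is designed to support, and it handles the quantifiers correctly under the paper's (pointwise) definition of boundedness.
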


Furthermore, for the completeness of the $n$-normed space with respect to the norms of class-$m$ collections
we have the following theorem.

\begin{thm}\label{comp}
	Let $(X,\|\cdot,\dots,\cdot\|)$ be an $n$-normed space and $m \in \{1, \dots,n\}$. Then
	$X$ is complete with respect to the norms of class-$1$ collection if and only if $X$ is
	complete with respect to the norms of class-$m$ collection.
\end{thm}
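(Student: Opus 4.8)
The plan is to reduce completeness entirely to the two equivalences already in hand: the equivalence of convergence across collections (Theorem \ref{convT}) and the equivalence of the Cauchy property across collections (Corollary \ref{ceq}, already noted in the Remark). Since ``complete'' means precisely ``every Cauchy sequence converges,'' once the hypothesis (being Cauchy) and the conclusion (converging) are both insensitive to the choice of class-$m$ collection, so is the statement that their implication always holds.

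First I would prove the forward direction. Assume $X$ is complete with respect to the norms of class-$1$ collection, and let $\{x_{k}\} \subset X$ be an arbitrary Cauchy sequence with respect to the norms of class-$m$ collection. By Corollary \ref{ceq} (applied with $m_{1}=m$, $m_{2}=1$), $\{x_{k}\}$ is also a Cauchy sequence with respect to the norms of class-$1$ collection. By the assumed completeness, there exists $x \in X$ such that $x_{k} \to x$ with respect to the norms of class-$1$ collection, i.e. $\|\overline{x_{k}-x}\|^{*}_{i} \to 0$ for every $i \in \{1,\dots,n\}$. Applying Theorem \ref{convT} to the sequence $\{x_{k}-x\}$, we obtain $\|\overline{x_{k}-x}\|^{*}_{i_{1},\dots,i_{m}} \to 0$ for every $\{i_{1},\dots,i_{m}\} \subset \{1,\dots,n\}$, that is, $x_{k} \to x$ with respect to the norms of class-$m$ collection, with the same limit $x$. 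Hence every Cauchy sequence with respect to the norms of class-$m$ collection converges, so $X$ is complete with respect to the norms of class-$m$ collection.

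For the converse I would run the identical argument with the roles of $1$ and $m$ interchanged: a Cauchy sequence with respect to the norms of class-$1$ collection is Cauchy with respect to the norms of class-$m$ collection by Corollary \ref{ceq}, it converges with respect to the norms of class-$m$ collection by the assumed completeness, and then converges with respect to the norms of class-$1$ collection by Theorem \ref{convT}.

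I do not expect a genuine obstacle here; the proof is essentially a bookkeeping of citations. The only point that deserves a word of care is that the equivalence must be used at the level of a fixed sequence and a fixed limit --- Theorem \ref{convT} must be invoked so that the limit point is \emph{shared} by both notions of convergence, rather than merely asserting that ``convergent'' and ``convergent'' coincide as unqualified properties --- and this is exactly what the cited theorem provides by applying it to $\{x_{k}-x\}$.
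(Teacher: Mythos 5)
Your proposal is correct and follows essentially the same route as the paper's proof: transfer the Cauchy property to the class-$1$ collection, use the assumed completeness there, and transfer the convergence (with the same limit) back via Theorem \ref{convT}. If anything, your version is the more carefully cited one --- the paper invokes Theorem \ref{cauT} where Corollary \ref{ceq} is what is actually needed --- and your remark that the limit point must be shared across the two notions of convergence is a worthwhile precision.
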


\begin{proof} Suppose that $X$ is complete with respect to the norms of class-$1$
	collection and $m \in \{1,\dots,n\}$. Take any Cauchy sequence $\{x_{k}\}$ with respect to the
	norms of class-$m$ collection in $X$. Then by Theorems \ref{convT} and \ref{cauT} we have $\{x_{k}\}$
	is a convergent sequence with respect to the norms of class-$m$ collection in $X$. This tells us that
	$X$ is complete with respect to the norms of class-$m$ collection. The converse is similar.
\end{proof}

\begin{rem}
	As in Corollary \ref{ceq}, one can see that the equivalence also applies to boundedness and completeness of
	a set. Then, we will use the word 'bounded' instead of the phrase 'bounded with respect
	to the norms of class-$m$ collection', for some $m\in \{1,\dots,n\}$.
\end{rem}

\begin{rem}
	For a fixed $ m \in \{1, \dots,n\}$, the convergence of a sequence, the closedness and the boundedness of
	a set with respect to the norms of class-$m$ collection may be investigated with respect to some norms
	$\|\cdot\|^{*}_{i_{1},\dots,i_{m}}$ we choose such that
	\[
	\bigcup \{i_{1},\dots,i_{m}\} \supseteq \{ 1,\dots,n\}.
	\]
	Moreover, the least number of norms that can be used to investigate these notions is
	$\left\lceil \frac{n}{m}\right\rceil$. Next, by using a similar approach we will study
	continuous mappings, contractive mappings, and also prove a fixed point theorem of
	contractive mappings of a closed and bounded in an $n$-normed space.
\end{rem}

\section{Continuous Mappings with Respect to the Norms of Class-$m$ Collection}

We shall now discuss the continuity of a mapping with respect to the norms of class-$m$ collection in
an $n$-normed space.

\begin{defn}\label{d1}
	Let $(X,\|\cdot,\dots,\cdot\|^{*})$ be an $n_{1}$-normed space and $(Z,\|\cdot,\dots,\cdot\|^{**})$
	be an $n_{2}$-normed space and $l \in \{1,\dots,n_{1}\}$, $m \in \{1,\dots,n_{2}\}$. Suppose that $f: X \to Z$.
	\begin{enumerate}
		\item[(i)] We say that $f$ is \textbf{continuous with respect to the norms of class-$\bm{(l,m)}$ collections at $\bm{a \in X}$}
		if and only if for any $\epsilon > 0$ there exists a $\delta > 0$ such that for $x \in X$ with
		$\|\overline{x-a}\|^{*}_{i_{1},\dots,i_{l}}< \delta$ for every $\{i_{1},\dots,i_{l}\} \subset
		\{1,\dots,n_{1}\}$, we have $\|\overline{f(x)-f(a)}\|^{**}_{j_{1},\dots,j_{m}} < \epsilon$
		for every $\{j_{1},\dots,j_{m}\} \subset \{1,\dots,n_{2}\}$.
		\item[(ii)] We say that $f$ is \textbf{continuous with respect to the norms of class-$\bm{(l,m)}$ collections on $\bm{X}$}
		if and only if $f$ continuous with respect to the norms of class-$(l,m)$ collection at each $x\in X$.
	\end{enumerate}
\end{defn}

\noindent Moreover, if $Z=X$, then we say '$f$ is continuous with respect to the norms of class-$m$ collections'
instead of 'continuous with respect to the norms of class-$(m,m)$ collections'.
The norms $\|\cdot\|^{*}_{i_{1},\dots,i_{l}}$, $\|\cdot\|^{**}_{j_{1},\dots,j_{m}}$ are
the norms of class-$(l,m)$ collections in $X$ and $Z$ respectively. Note that we define class-$(l,m)$
collections by using a linearly independent set consisting of $n_{1}$ and $n_{2}$ vectors in $X$ and $Z$
respectively. Based on the definition, we have the following theorem.

\begin{thm}\label{c1m}
	Let $(X,\|\cdot,\dots,\cdot\|^{*})$ be an $n_{1}$-normed space and $(Z,\|\cdot,\dots,\cdot\|^{**})$
	be an $n_{2}$-normed space and $l \in \{1,\dots,n_{1}\}$, $m \in \{1,\dots,n_{2}\}$. A mapping
	$f: X \to Z$ is continuous with respect to class-$1$
	collections if and only if $f$ is continuous with respect to class-$(l,m)$ collections.
\end{thm}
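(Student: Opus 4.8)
The plan is to reduce everything to the elementary identity, noted just after (\ref{e05}), that a class-$m$ norm is a sum of $m$ class-$1$ norms: $\|\overline{u}\|^{*}_{i_{1},\dots,i_{m}}=\|\overline{u}\|^{*}_{i_{1}}+\dots+\|\overline{u}\|^{*}_{i_{m}}$, and likewise in $Z$. From this one extracts two one-sided comparisons. First, since each summand is nonnegative and (because $m\le n_{1}$) every single index $i$ can be completed to an $m$-subset, a bound on all class-$m$ norms of $\overline{u}$ forces the same bound on every class-$1$ norm $\|\overline{u}\|^{*}_{i}$. Second, $\|\overline{u}\|^{*}_{i_{1},\dots,i_{m}}\le m\max_{1\le i\le n_{1}}\|\overline{u}\|^{*}_{i}$, so a uniform bound $\delta/m$ on all class-$1$ norms forces the bound $\delta$ on every class-$m$ norm. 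These are exactly the inequalities behind Theorems \ref{convT} and \ref{t1.5}; here one only needs to track the constants uniformly.

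For the direction ``class-$1$ $\Rightarrow$ class-$(l,m)$'', I would fix $a\in X$ and $\epsilon>0$ and apply class-$(1,1)$ continuity at $a$ with tolerance $\epsilon/m$ to obtain $\delta>0$. If $\|\overline{x-a}\|^{*}_{i_{1},\dots,i_{l}}<\delta$ for every $l$-subset, the first comparison (in $X$) gives $\|\overline{x-a}\|^{*}_{i}<\delta$ for every $i$, hence $\|\overline{f(x)-f(a)}\|^{**}_{j}<\epsilon/m$ for every $j$, and summing $m$ such terms yields $\|\overline{f(x)-f(a)}\|^{**}_{j_{1},\dots,j_{m}}<\epsilon$ for every $m$-subset. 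For the converse, fix $a$ and $\epsilon>0$, take $\delta_{0}>0$ from class-$(l,m)$ continuity at $a$, and put $\delta:=\delta_{0}/l$; if $\|\overline{x-a}\|^{*}_{i}<\delta$ for all $i$, the second comparison gives $\|\overline{x-a}\|^{*}_{i_{1},\dots,i_{l}}\le l\max_{i}\|\overline{x-a}\|^{*}_{i}<\delta_{0}$ for every $l$-subset, so $\|\overline{f(x)-f(a)}\|^{**}_{j_{1},\dots,j_{m}}<\epsilon$ for every $m$-subset, and the first comparison in $Z$ gives $\|\overline{f(x)-f(a)}\|^{**}_{j}<\epsilon$ for every single index $j$. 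In both cases the estimate holds at every $a\in X$, so it upgrades from pointwise continuity to continuity on $X$ by Definition \ref{d1}(ii).

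I do not expect a genuine obstacle here: the argument is purely formal once the two comparison inequalities are recorded. The only points needing a little attention are the bookkeeping of tolerances (the $\epsilon/m$ in one direction, the $\delta/l$ in the other) and the remark that $l\le n_{1}$ and $m\le n_{2}$ guarantee every index lies in some admissible subset — the very same mechanism that makes all the class-$m$ notions of convergence, Cauchyness, and boundedness coincide earlier in the paper.
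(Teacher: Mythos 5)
Your argument is correct and follows essentially the same route as the paper: both directions reduce to the additive decomposition $\|\overline{u}\|^{*}_{i_{1},\dots,i_{m}}=\|\overline{u}\|^{*}_{i_{1}}+\dots+\|\overline{u}\|^{*}_{i_{m}}$ together with an application of class-$1$ continuity at tolerance $\epsilon/m$. Your bookkeeping is in fact a little more careful than the paper's (whose rescaled threshold $\delta/l$ sits on the wrong side of the comparison ``all class-$l$ norms $<\delta$ implies all class-$1$ norms $<\delta$'', a harmless slip), and you also write out the converse that the paper dismisses as obvious.
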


\begin{proof} Let $f$ be a continuous mapping with respect to the norms of class-$1$
	collections at $a \in X$, $l \in \{1,\dots,n_{1}\}$ and $m \in \{1,\dots,n_{2}\}$. For any $\epsilon > 0$,
	there is $\delta > 0$ such that for any $x \in X$ with $\|\overline{x-a}\|^{*}_{i} < \frac{\delta}{l}$
	for every $i \in \{1,\dots,n_{1}\}$, we have $\|\overline{f(x)-f(a)}\|^{*}_{j} < \frac{\epsilon}{m}$ for every
	$j \in \{1,\dots,n_{2}\}$. Then for any $x \in X$ with $\|\overline{x-a}\|^{*}_{i_{1},\dots,i_{l}}< \delta$ 
	for every $\{i_{1},\dots,i_{l}\}
	\subset \{1,\dots,n_{1}\}$,	we have $\|\overline{f(x)-f(a)}\|^{*}_{j_{1},\dots,j_{m}} < \epsilon$
	for every $\{j_{1},\dots,j_{m}\} \subset \{1,\dots,n_{2}\}$. This means $T$ is continuous with respect to
	the norms of class-$(l,m)$ collection.
	
	\noindent The converse is obvious. 
\end{proof}

\begin{cor}
	Let $(X,\|\cdot,\dots,\cdot\|^{*})$ and $(Z,\|\cdot,\dots,\cdot\|^{**})$ be an $n_{1}$-normed space
	and an $n_{2}$-normed space respectively, $l_{1}, l_{2} \in \{1, \dots,n_{1}\}$ and $m_{1}, m_{2} \in
	\{1, \dots,n_{2}\}$. If a mapping $T: X\to Z$ is continuous with respect to class-$(l_{1},m_{1})$, then
	$T$ continuous with respect to the norms of class-$(l_{2},m_{2})$.
\end{cor}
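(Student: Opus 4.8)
The plan is to deduce this corollary directly from Theorem~\ref{c1m}, which establishes the equivalence between continuity with respect to class-$1$ collections and continuity with respect to class-$(l,m)$ collections for \emph{every} admissible pair $(l,m)$. Since class-$1$ continuity thus serves as a common reference point for all these notions, the strategy is to route the argument through it: first convert class-$(l_1,m_1)$ continuity into class-$1$ continuity, then convert class-$1$ continuity into class-$(l_2,m_2)$ continuity.

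First I would fix an arbitrary point $a \in X$ and assume $T$ is continuous with respect to the norms of class-$(l_1,m_1)$ collections at $a$. Applying the ``if'' direction of Theorem~\ref{c1m} — here in its trivial form, the assertion that continuity with respect to class-$(l_1,m_1)$ collections implies continuity with respect to class-$1$ collections — we obtain that $T$ is continuous with respect to the norms of class-$1$ collections at $a$. The mechanism behind this step is that $\|\overline{x-a}\|^{*}_{i} \le \|\overline{x-a}\|^{*}_{i_1,\dots,i_l}$ whenever $i \in \{i_1,\dots,i_l\}$, so a bound on a single class-$(l_1,m_1)$ norm containing the relevant index forces a bound on the corresponding class-$1$ seminorm, while conversely class-$1$ bounds on $\|\overline{T(x)-T(a)}\|^{**}_{j}$ for all $j$ yield a class-$m_1$ bound after summing $m_1$ terms via the triangle inequality.

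Next I would invoke the ``only if'' direction of Theorem~\ref{c1m}, this time with the pair $(l_2,m_2)$: starting from continuity with respect to class-$1$ collections at $a$, we conclude continuity with respect to class-$(l_2,m_2)$ collections at $a$. Since $a \in X$ was arbitrary, $T$ is continuous with respect to the norms of class-$(l_2,m_2)$ collections on all of $X$, which is exactly the claim.

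Since every step is a direct citation of Theorem~\ref{c1m}, I expect no genuine obstacle; the only care needed is the bookkeeping with the $\delta/l$ and $\epsilon/m$ adjustments, handled exactly as in the proof of Theorem~\ref{c1m}, so that the two implications compose without loss. Indeed, one could shortcut the two-step passage entirely by observing that continuity with respect to class-$(l_1,m_1)$ collections and continuity with respect to class-$(l_2,m_2)$ collections are, by Theorem~\ref{c1m}, each equivalent to the single property of continuity with respect to class-$1$ collections, hence equivalent to one another.
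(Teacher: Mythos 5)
Your proposal is correct and matches the paper's (implicit) argument: the corollary is stated as an immediate consequence of Theorem~\ref{c1m}, obtained exactly as you describe by passing through continuity with respect to the class-$1$ collections, using both directions of the equivalence. No issues.
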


\begin{rem}
	By the above corollary, we can see that all types of continuity of a mapping with respect to the norms of
	class-$(l,m)$ collections are equivalent.
	Then from now on we will use the word 'continuous' instead of 'continuous with respect to the norms of
	class-$(l,m)$ collection'.
\end{rem}

Now we present a proposition that gives a relation between a convergent sequence and a continuous
mapping with respect to the norms of class-$(l,m)$ collection.

\begin{prop}\label{p1}
	Let $(X,\|\cdot,\dots,\cdot\|^{*})$ and $(Z,\|\cdot,\dots,\cdot\|^{**})$ be an $n_{1}$-normed space
	and an $n_{2}$-normed space respectively, $l \in \{1,\dots,n_{1}\}$, $m\in \{1,\dots,n_{2}\}$.
	Suppose that $T: X \to Z$ is continuous. If $\{x_{k}\}$ converges to $x$, then
	\[
	\lim _{x\to \infty} Tx_{n}=Tx.
	\]
\end{prop}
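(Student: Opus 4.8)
The plan is to deduce sequential continuity from the $\epsilon$--$\delta$ continuity of Definition \ref{d1}, after first normalizing which collection we work in. Since Theorem \ref{c1m} together with its corollary tells us that continuity with respect to the norms of class-$(l,m)$ collections is equivalent to continuity with respect to the norms of class-$(1,1)$ collections, and Theorem \ref{convT} tells us that convergence with respect to the norms of class-$m$ collection is equivalent to convergence with respect to the norms of class-$1$ collection, we may assume without loss of generality that $T$ is continuous with respect to the norms of class-$(1,1)$ collections and that $\{x_{k}\}$ converges to $x$ with respect to the norms of class-$1$ collection in $X$. This reduction is the only slightly delicate point: it is what allows both hypotheses to be expressed in terms of the single-index norms $\|\cdot\|^{*}_{i}$ and $\|\cdot\|^{**}_{j}$, after which the argument becomes a textbook one.

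Next I would fix $\epsilon > 0$ and invoke the continuity of $T$ at the point $x$: there exists $\delta > 0$ such that whenever $y \in X$ satisfies $\|\overline{y-x}\|^{*}_{i} < \delta$ for every $i \in \{1,\dots,n_{1}\}$, we have $\|\overline{Ty-Tx}\|^{**}_{j} < \epsilon$ for every $j \in \{1,\dots,n_{2}\}$. Then, using the convergence of $\{x_{k}\}$ to $x$, I would choose $N \in \mathbb{N}$ such that $\|\overline{x_{k}-x}\|^{*}_{i} < \delta$ for all $k \geq N$ and all $i \in \{1,\dots,n_{1}\}$.

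Combining the two, for every $k \geq N$ and every $j \in \{1,\dots,n_{2}\}$ we obtain $\|\overline{Tx_{k}-Tx}\|^{**}_{j} < \epsilon$. Since $\epsilon > 0$ was arbitrary, this is precisely the statement that $\{Tx_{k}\}$ converges to $Tx$ with respect to the norms of class-$1$ collection, and hence, applying Theorem \ref{convT} once more in $Z$, with respect to the norms of class-$m$ collection as well. I do not expect any genuine obstacle; the proof is a direct chaining of the two quantified statements, and the main thing to get right is the appeal to the equivalence theorems so that no loss of generality is incurred in working only with class-$1$ norms.
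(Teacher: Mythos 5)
Your proposal is correct and follows essentially the same route as the paper: fix $\epsilon$, extract $\delta$ from the continuity of $T$ at $x$, pick $N$ from the convergence of $\{x_k\}$, and chain the two quantified statements. The only difference is that you first normalize to the class-$1$ norms via the equivalence theorems and convert back at the end, whereas the paper runs the identical argument directly in the class-$(l,m)$ norms; this is a cosmetic variation, not a different method.
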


\noindent\textit{Proof.} Let $l \in \{1,\dots,n_{1}\}$, $m \in \{1,\dots,n_{2}\}$, and
$\epsilon >0 $. Since $T$ is continuous, there is a $\delta >0$ such that if $\|\overline{y-x}\|_{i_{1},
	\dots,i_{m}}^{*}<\delta$ for every $\{i_{1},\dots,i_{m}\} \subset \{1,\dots,n_{1}\}$, then
$\|Ty-T{x}\|_{j_{1},\dots,j_{l}}^{**} < \epsilon$.
Since $\{x_{k}\}$ converges to $x$ with respect to norms of class-$m$, choose $N \in \mathbb{N}$ such
that for $k \geq N$ we have $\|x_{k}-x\|_{i_{1},\dots,i_{m}} \leq \delta$. It then follows that
$\|\overline{Tx_{k}-Tx}\|_{j_{1},\dots,j_{m}}^{**}<\epsilon$ whenever $k \geq N$. Therefore $Tx_{k}$
converges to $Tx$.

\section{Fixed Point Theorem for Contractive Mappings with Respect to The Norms of Class-$m$ Collection}

In this section, we shall discuss contractive mappings with respect
to the norms of class-$m$ collection in an $n$-normed space and its fixed point theorem.

\begin{defn}
	Let $(X,\|\cdot,\dots,\cdot\|)$ be an $n$-normed space and $m \in \{1,\dots,n\}$.
	A mapping $T: X \to X$ is called \textbf{contractive with respect to the norms of class-$\bm{m}$ collection}
	if there is a $C \in (0,1)$ such that for any $x,y \in X$ we have
	\[
	\|\overline{Tx-Ty}\|^{*}_{i_{1},\dots, i_{m}} \leq C\, \|\overline{x-y}\|^{*}_{i_{1},\dots, i_{m}},
	\]
	for every $\{i_{1},\dots, i_{m}\} \subset \{1,\dots,n\}$ with $i_{1} < \dots < i_{m}$
\end{defn}

Following the definitions of continuous mappings and contractive mappings, we have the following
proposition and theorem.

\begin{prop}\label{p2}
	Let $(X,\|\cdot,\dots,\cdot\|)$ be an $n$-normed space and $m \in \{1,\dots,n\}$.  If $T$ is a
	contractive mapping with respect to the norms of class-$m$ collection, then $T$ is continuous.
\end{prop}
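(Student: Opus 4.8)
The plan is to verify the $\epsilon$--$\delta$ condition of Definition \ref{d1} directly for class-$(m,m)$ continuity and then invoke Theorem \ref{c1m} (together with the corollary and remark following it) to upgrade this to continuity in the unqualified sense. First I would fix an arbitrary point $a \in X$ and an arbitrary $\epsilon > 0$, and simply set $\delta := \epsilon$. Then for any $x \in X$ satisfying $\|\overline{x-a}\|^{*}_{i_{1},\dots,i_{m}} < \delta$ for every $\{i_{1},\dots,i_{m}\} \subset \{1,\dots,n\}$, the contractivity hypothesis yields, for each such index set, $\|\overline{Tx-Ta}\|^{*}_{i_{1},\dots,i_{m}} \le C\,\|\overline{x-a}\|^{*}_{i_{1},\dots,i_{m}} < C\delta < \delta = \epsilon$, where the last strict inequality uses $0 < C < 1$. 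This is exactly the assertion that $T$ is continuous with respect to the norms of class-$m$ collection at $a$; since $a$ was arbitrary, $T$ is continuous on $X$ with respect to class-$m$.

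The second step is purely a matter of bookkeeping: by Theorem \ref{c1m} applied with $n_{1}=n_{2}=n$ and $l=m$, continuity with respect to class-$(m,m)$ collections is equivalent to continuity with respect to class-$1$ collections, and hence — by the corollary stating that all types of continuity coincide — $T$ is continuous in the sense we have agreed to abbreviate as simply ``continuous.'' So once the $\delta=\epsilon$ estimate is in place, the conclusion follows formally.

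The whole argument is essentially a one-line estimate, so I do not expect a genuine obstacle. The only point that deserves a moment's care is the quantifier structure in Definition \ref{d1}: one must produce a \emph{single} $\delta$ that works simultaneously for \emph{all} admissible index sets $\{i_{1},\dots,i_{m}\}$, and guarantee the output inequality for \emph{all} admissible index sets as well. This is immediate here because the defining inequality of a contractive mapping is postulated for each index set separately with the \emph{same} constant $C$, so the uniform choice $\delta = \epsilon$ handles every index set at once. Note also that the strict inequality $C<1$ is what lets us take $\delta=\epsilon$ with no rescaling; had we only a Lipschitz-type bound with constant $C$ possibly $\ge 1$, the same proof would go through with $\delta = \epsilon/C$ instead.
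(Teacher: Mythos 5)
Your proof is correct and follows essentially the same one-line $\epsilon$--$\delta$ argument as the paper; the only cosmetic difference is that you take $\delta=\epsilon$ (using $C<1$) where the paper takes $\delta=\epsilon/C$, a variant you yourself note at the end.
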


\begin{proof} For an $m \in \{1,\dots,n\}$, let $T$ be a contractive mapping with respect to
	the norms of class-$m$ collection, then there is a $C \in (0,1)$ such that for any $x,y \in X$ we have
	\[
	\|\overline{Tx-Ty}\|^{*}_{i_{1},\dots, i_{m}} \leq C\, \|\overline{x-y}\|^{*}_{i_{1},\dots, i_{m}},
	\]
	for every $\{i_{1},\dots, i_{m}\} \subset \{1,\dots,n\}$.
	
	For any $\epsilon > 0$, choose $\delta = \frac{\epsilon}{C}$. Then, for $x,y \in X$ where
	$\|\overline{x-y}\|^{*}_{i_{1},\dots, i_{m}}< \delta$ for every $\{i_{1},\dots, i_{m}\} \in \{1,\dots,n\}$, we have
	\[
	\|\overline{Tx-Ty}\|^{*}_{i_{1},\dots, i_{m}} < \epsilon,
	\]
	for every $\{i_{1},\dots, i_{m}\} \in
	\{1,\dots,n\}$. Therefore, $T$ is continuous in $X$ 
\end{proof}

Note that, for an $m_{1} \in \{1,\dots,n\}$ a contractive mapping with respect to the norms
of class-$m_{1}$ collection are continuous with respect to the norms of class-$m$ collection,
for any $m \in \{1,\dots,n\}$. Moreover, we have the following theorem.

\begin{thm}\label{lm}
	Let $(X,\|\cdot,\dots,\cdot\|)$ be an $n$-normed space, $m \in \{1,\dots,n\}$ and $T : X \to X$.
	If $T$ is a contractive mapping with respect to the norms of class-$1$ collection, then $T$ is a
	contractive mapping with respect to the norms of class-$m$ collection.
\end{thm}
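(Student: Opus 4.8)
The plan is to reduce everything to the additive decomposition of the class-$m$ norm recorded just after equation (\ref{e05}), namely
$\|\overline{u}\|^{*}_{i_{1},\dots,i_{m}} = \|\overline{u}\|^{*}_{i_{1}} + \dots + \|\overline{u}\|^{*}_{i_{m}}$,
which writes each norm of the class-$m$ collection as a finite sum of norms from the class-$1$ collection. Since the contractivity hypothesis is stated termwise for the class-$1$ norms $\|\cdot\|^{*}_{i}$, it should transfer to each such sum by simply adding $m$ inequalities.

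Concretely, I would start from the class-$1$ hypothesis: there is a single $C \in (0,1)$ such that $\|\overline{Tx-Ty}\|^{*}_{i} \le C\,\|\overline{x-y}\|^{*}_{i}$ for all $x,y \in X$ and every $i \in \{1,\dots,n\}$. Fix an arbitrary subset $\{i_{1},\dots,i_{m}\} \subset \{1,\dots,n\}$ with $i_{1} < \dots < i_{m}$ and arbitrary $x,y \in X$. Applying the hypothesis with the indices $i_{1},\dots,i_{m}$ in turn gives $m$ inequalities $\|\overline{Tx-Ty}\|^{*}_{i_{k}} \le C\,\|\overline{x-y}\|^{*}_{i_{k}}$ for $k = 1,\dots,m$. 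Summing these over $k$ and invoking the decomposition above on both sides yields $\|\overline{Tx-Ty}\|^{*}_{i_{1},\dots,i_{m}} \le C\,\|\overline{x-y}\|^{*}_{i_{1},\dots,i_{m}}$, with the same constant $C$. As the index set and the points $x,y$ were arbitrary, this is exactly contractivity with respect to the norms of the class-$m$ collection.

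The only point that requires a word of care is uniformity of the constant: the class-$1$ hypothesis furnishes one $C$ valid for all $i$ simultaneously, so the identical $C$ serves every norm of the class-$m$ collection, and in particular it remains in $(0,1)$ — there is no loss and, unlike the continuity argument in Theorem \ref{c1m}, no rescaling by the number $m$ of summands is needed, because the multiplicative constant factors straight out of the sum. I do not expect any genuine obstacle here; the whole content of the statement lies in the observation that a class-$m$ norm is a sum of class-$1$ norms, after which the estimate is immediate.
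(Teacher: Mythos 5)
Your argument is correct and is essentially identical to the paper's proof: both apply the class-$1$ contractivity termwise to the indices $i_{1},\dots,i_{m}$, sum the resulting $m$ inequalities, and use the decomposition $\|\overline{u}\|^{*}_{i_{1},\dots,i_{m}}=\|\overline{u}\|^{*}_{i_{1}}+\dots+\|\overline{u}\|^{*}_{i_{m}}$ to factor out the same constant $C$. Your added remark that no rescaling of $C$ is needed is a correct and worthwhile clarification, but it does not change the substance of the argument.
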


\begin{proof} Suppose that $T$ is a contractive mapping with respect to the norms of class-$1$
	collection, then there is a $C\in(0,1)$ such that for any $x,y \in X$ we have
	\begin{equation}\label{e1}
	\|\overline{Tx-Ty}\|^{*}_{j} \leq C \|\overline{x-y}\|^{*}_{j},
	\end{equation}
	for every $j \in \{1, \dots,n\}$. Let $m \in \{1,\dots,n\}$, by (\ref{e1}) we have
	\begin{eqnarray*}
		\|\overline{Tx-Ty}\|^{*}_{i_{1}} & \leq C \|\overline{x-y}\|^{*}_{i_{1},}\\
		& \vdots \\
		\|\overline{Tx-Ty}\|^{*}_{i_{m}} &\leq C \|\overline{x-y}\|^{*}_{i_{m}},
	\end{eqnarray*}
	for every $\{i_{1},\dots,i_{m}\} \in \{1,\dots,n\}$.
	It therefore follows from the above inequalities that
	\begin{eqnarray*}
		\|\overline{Tx-Ty}\|^{*}_{i_{1},\dots,i_{m}}
		& = \|\overline{Tx-Ty}\|^{*}_{i_{1}} + \dots + \|\overline{Tx-Ty}\|^{*}_{i_{m}} \\ & \leq C \left( \|\overline{x-y}\|^{*}_{i_{1}}
		+ \dots + \|\overline{x-y}\|^{*}_{i_{m}}\right)\\
		& = C \|\overline{x-y}\|^{*}_{i_{1},\dots,i_{m}},
	\end{eqnarray*}
	for every $\{i_{1},\dots,i_{m}\} \subset \{1,\dots,n\}$.
	If we consider the norms of class-$m$ collection, then this means  $T$ is a contractive mapping with
	respect to the norms of class-$m$ collection.
\end{proof}

\noindent We also have this following theorem which will be used later.

\begin{thm}\label{mn}
	Let $(X,\|\cdot,\dots,\cdot\|)$ be an $n$-normed space, $m \in \{1,\dots,n\}$ and $T : X \to X$.
	If $T$ is a contractive mapping with respect to the norms of class-$m$ collection, then $T$ is also a
	contractive mapping with respect to the norm of class-$n$ collection.
\end{thm}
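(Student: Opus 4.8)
The plan is to reduce everything to the class-$1$ norms and exploit the symmetry of the $\binom{n}{m}$ subsets of $\{1,\dots,n\}$. Recall from the construction in Section~\ref{Sec:2} that the class-$n$ collection consists of the single quotient space $X/Y^{0}_{1,\dots,n}$, and that by~(\ref{e05}) its norm
\[
\|\overline{u}\|^{*}_{1,\dots,n}=\sum_{k=1}^{n}\|\overline{u}\|^{*}_{k}
\]
is just the sum of all $n$ class-$1$ norms; more generally every class-$m$ norm decomposes as $\|\overline{u}\|^{*}_{i_{1},\dots,i_{m}}=\|\overline{u}\|^{*}_{i_{1}}+\dots+\|\overline{u}\|^{*}_{i_{m}}$.

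First I would fix $x,y\in X$ and record the contractive estimate hypothesised for $T$, namely
\[
\|\overline{Tx-Ty}\|^{*}_{i_{1},\dots,i_{m}}\le C\,\|\overline{x-y}\|^{*}_{i_{1},\dots,i_{m}}
\]
with a fixed $C\in(0,1)$, for each of the $\binom{n}{m}$ subsets $\{i_{1},\dots,i_{m}\}\subset\{1,\dots,n\}$. Then I would sum these $\binom{n}{m}$ inequalities. Using the decomposition into class-$1$ norms, each side becomes $\sum_{k=1}^{n}c_{k}\,\|\overline{\,\cdot\,}\|^{*}_{k}$, where $c_{k}$ is the number of $m$-element subsets of $\{1,\dots,n\}$ containing the index $k$. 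Since $c_{k}=\binom{n-1}{m-1}$ for every $k$, both sides collapse to $\binom{n-1}{m-1}\,\|\overline{\,\cdot\,}\|^{*}_{1,\dots,n}$, and dividing by the positive integer $\binom{n-1}{m-1}$ gives
\[
\|\overline{Tx-Ty}\|^{*}_{1,\dots,n}\le C\,\|\overline{x-y}\|^{*}_{1,\dots,n},
\]
which is exactly the contractivity of $T$ with respect to the norm of class-$n$ collection, with the same constant $C$.

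There is essentially no hard step: the only point that needs care is the combinatorial identity that each index lies in $\binom{n-1}{m-1}$ of the $m$-element subsets of $\{1,\dots,n\}$, which is what makes the two summed sides symmetric in the $n$ class-$1$ norms, so that the averaging reproduces precisely the class-$n$ norm rather than a lopsided combination of class-$1$ norms. As a sanity check, for $m=n$ the sum runs over the single subset $\{1,\dots,n\}$ and the statement is a tautology, while for $m=1$ the same computation recovers a direct proof that class-$1$ contractivity implies class-$n$ contractivity, in agreement with Theorem~\ref{lm}.
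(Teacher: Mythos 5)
Your proof is correct and follows essentially the same route as the paper: sum the $\binom{n}{m}$ contractive inequalities, use the decomposition of each class-$m$ norm into class-$1$ norms, and observe that the symmetry collapses both sides to a multiple of $\|\cdot\|^{*}_{1,\dots,n}$. In fact you are more careful than the paper on the one combinatorial point: the multiplicity with which each index $k$ occurs is $\binom{n-1}{m-1}$, as you say, whereas the paper writes the factor as $(n-1)$ --- a discrepancy that is harmless only because the constant cancels from both sides.
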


\begin{proof} Let $T: X \to X$ be a contractive mapping with respect to the norms of
	class-$m$ collection for an  $m \in  \{1,\dots,n\}$. Then there is a $C \in (0,1)$ such that for
	every $x,y \in X$ we have
	\begin{equation}\label{eq8}
	\|\overline{Tx-Ty}\|^{*}_{i_{1},\dots,i_{m}} \leq C \|\overline{x-y}\|^{*}_{i_{1},\dots,i_{m}},
	\end{equation}
	for every $\{i_{1},\dots,i_{m}\} \subset \{1,\dots,n\}$. One can see that (\ref{eq8}) contains
	$\binom{n}{m}$ inequalities. From these inequalities, we have
	$$(n-1)\|\overline{Tx-Ty}\|^{*}_{1,\dots,n} \leq C (n-1) \|\overline{x-y}\|^{*}_{1,\dots,n},$$
	or
	$$\|\overline{Tx-Ty}\|^{*}_{1,\dots,n} \leq C \|\overline{x-y}\|^{*}_{1,\dots,n},$$
	which means that $T$ is a contractive mapping with respect to the norm of class-$n$ collection.
\end{proof}

Finally, we provide a fixed point theorem for a contractive mapping in a closed and bounded set on
an $n$-normed space, with respect to the norms of class-$m$ collection.

\begin{thm}\label{FPt}
	Let $(X,\|\cdot,\dots,\cdot\|)$ be an $n$-normed space, $m \in \{1,\dots,n\}$ and $K \subset X$
	is nonempty, closed and bounded.
	Suppose that $X$ is complete (with respect to the norms of class-$m$ collection). If T: $K \to K$
	is a contractive mapping with respect to the norms of class-$m$ collection, then $T$ has a unique fixed point.
\end{thm}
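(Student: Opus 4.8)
The plan is to deduce this from the classical Banach contraction principle by collapsing the whole family of class-$m$ norms into a single honest norm on $X$. The crucial observation is that the class-$n$ collection contains only one quotient space: since $Y=\{y_1,\dots,y_n\}$ we have $Y^{0}_{1,\dots,n}={\rm span}\,(Y\setminus\{y_1,\dots,y_n\})=\{0\}$, so $X^{*}_{1,\dots,n}=X/\{0\}$ is identified with $X$ and the map $u\mapsto\|\overline u\|^{*}_{1,\dots,n}$ coming from (\ref{e05}) with $m=n$ is a genuine norm on $X$. Positive-definiteness holds because if that sum vanishes then $u$ is linearly dependent with each $(n-1)$-element subset of $Y$, which by the independence of $Y$ forces $u\in{\rm span}\,Y$ and then $u\in\bigcap_{k}{\rm span}\{y_j:j\neq k\}=\{0\}$. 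Put $d(x,y):=\|\overline{x-y}\|^{*}_{1,\dots,n}$, a metric on $X$.

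Next I would translate the hypotheses into this one norm. By Theorem \ref{mn}, a map that is contractive with respect to the norms of class-$m$ collection, with constant $C\in(0,1)$, satisfies $d(Tx,Ty)\le C\,d(x,y)$ for all $x,y\in K$. By Theorem \ref{comp} (together with the remark that completeness does not depend on the class), completeness of $X$ with respect to class-$m$ gives that $(X,d)$ is a complete metric space; and since $K$ is closed — equivalently, closed for $d$-convergence, by Theorem \ref{convT} — it is a closed subset of $(X,d)$, hence $(K,d)$ is itself complete. (Boundedness of $K$ is actually not needed for the argument, although it is assumed in the statement.)

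Now I would run Picard iteration inside $(K,d)$. Pick $x_0\in K$ and set $x_{k+1}=Tx_k\in K$; then $d(x_{k+1},x_k)\le C^{k}d(x_1,x_0)$, so for $p>q$
\[
d(x_p,x_q)\le\sum_{j=q}^{p-1}d(x_{j+1},x_j)\le\frac{C^{q}}{1-C}\,d(x_1,x_0),
\]
which tends to $0$ as $q\to\infty$. Thus $\{x_k\}$ is Cauchy in the complete space $(K,d)$ and converges to some $x^{*}\in K$. Since $T$ is continuous by Proposition \ref{p2}, $Tx^{*}=\lim_k Tx_k=\lim_k x_{k+1}=x^{*}$; alternatively one estimates $d(x^{*},Tx^{*})\le d(x^{*},x_{k+1})+C\,d(x_k,x^{*})\to 0$ directly. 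For uniqueness, if $Tx^{*}=x^{*}$ and $Ty^{*}=y^{*}$ then $d(x^{*},y^{*})=d(Tx^{*},Ty^{*})\le C\,d(x^{*},y^{*})$, and $C<1$ forces $x^{*}=y^{*}$.

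I expect the only genuinely delicate step to be the first one: checking carefully that $\|\cdot\|^{*}_{1,\dots,n}$ is a bona fide norm on $X$ and assembling the earlier equivalences so that ``contractive'', ``complete'' and ``closed'' with respect to class-$m$ really do pass to the metric space $(X,d)$. Once that reduction is in place there is no further obstacle, since the remainder is verbatim the Banach fixed point theorem.
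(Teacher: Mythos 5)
Your proof is correct, but it takes a different route from the one the paper uses for Theorem \ref{FPt}. The paper argues directly in the multi-norm language: it runs the Picard iteration and verifies the Cauchy estimate $\|\overline{x_k-x_l}\|^{*}_{i_1,\dots,i_m}\le (C^k+\dots+C^{l-1})M$ separately for every index set $\{i_1,\dots,i_m\}$ (using the boundedness of $K$ to produce the constant $M$, which, as you note, is not actually needed since $\|\overline{x_0-x_1}\|^{*}_{i_1,\dots,i_m}$ is a single finite number), then invokes completeness, closedness, and Propositions \ref{p1}--\ref{p2} to identify the limit as a fixed point. You instead collapse everything onto the class-$n$ collection, whose unique member is a genuine norm on $X$ itself (your verification of positive-definiteness -- that vanishing of all $n$ terms in (\ref{e05}) forces $u\in\bigcap_k\mathrm{span}\{y_j:j\ne k\}=\{0\}$ -- is the one substantive point the paper never spells out), transfer contractivity via Theorem \ref{mn} and completeness/closedness via Theorems \ref{comp} and \ref{convT}, and then quote the classical Banach contraction principle. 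This is essentially how the paper later proves Theorem \ref{fpl} for $\ell^p$, so your argument can be read as showing that the $\ell^p$-style reduction already yields the general Theorem \ref{FPt}. What your route buys is conceptual economy (the theorem becomes a corollary of the standard fixed point theorem once the norm equivalences are assembled) and the explicit observation that boundedness of $K$ is superfluous; what the paper's route buys is self-containedness, since it never needs to certify that $\|\cdot\|^{*}_{1,\dots,n}$ is positive definite on $X$. One small caveat: Theorem \ref{mn} is stated for $T:X\to X$, so you should note that its proof (summing the $\binom{n}{m}$ inequalities) applies verbatim to $x,y\in K$.
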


\begin{proof} Fix an  $m \in \{1,\dots,n\}$. Let $x_{0} \in K $ and $\{x_{k}\}$ be 	
	a sequence in $K$ such that
	\[
	x_{k}=T(x_{k-1})=T^{k}(x_{0}) \, \, \, \, ; \, \, \, k= 1, 2,\dots 
	\]
	Since $T$ is a contractive mapping, there is a $C \in (0,1)$
	such that for $x_{0},x_{1} \in K$ we have
	\begin{eqnarray*}
		\|\overline{T^{2}(x_{0})-T^{2}(x_{1})}\|^{*}_{i_{1},\dots,i_{m}} 	
		& =\|\overline{T(T(x_{0}))-T(T(x_{1}))}\|^{*}_{i_{1},\dots,i_{m}}\\
		& \leq  C \|\overline{T(x_{0})-T(x_{1})}\|^{*}_{i_{1},\dots,i_{m}}\\
		& \leq  C^{2} \|\overline{x_{0}-x_{1}}\|^{*}_{i_{1},\dots,i_{m}},
	\end{eqnarray*}
	
	\noindent for every $\{i_{1},\dots,i_{m}\} \subset \{1,\dots,n\}$. By using induction, we have
	\[
	\|\overline{T^{k}(x_{0})-T^{k}(x_{1})}\|^{*}_{i_{1},\dots,i_{m}} \leq  C^{k}
	\|\overline{x_{0}-x_{1}}\|^{*}_{i_{1},\dots,i_{m}},
	\]
	for every $\{i_{1},\dots,i_{m}\} \subset \{1,\dots,n\}$.
	Now we show that $\{x_{n}\}$ is a Cauchy sequence in $K$. Let $k,l \in \mathbb{N}$.
	Without loss of generality, we take $l>k$ and $l=k+p$, with $p \in \mathbb{N}$. Then we have
	\begin{eqnarray*}
		\|\overline{x_{k}-x_{l}}\|^{*}_{i_{1},\dots,i_{m}}	
		& = &\|\overline{x_{k}-x_{k+p}}\|^{*}_{i_{1},\dots,i_{m}}\\
		& \leq &\|\overline{x_{k}-x_{k+1}}\|^{*}_{i_{1},\dots,i_{m}} + \dots +
		\|\overline{x_{k+p-1}-x_{k+p}}\|^{*}_{i_{1},\dots,i_{m}}\\
		&=&\|\overline{T^{k}(x_{0})-T^{k}(x_{1})}\|^{*}_{i_{1},\dots,i_{m}}+ \dots +\\
		& & \|\overline{T^{k+p-1}(x_{0})-T^{k+p-1}(x_{1})}\|^{*}_{i_{1},\dots,i_{m}}\\
		& \leq & \left(C^{k} + \dots + C^{k+p-1}\right) \|\overline{x_{0}-x_{1}}\|^{*}_{i_{1},\dots,i_{m}},
	\end{eqnarray*}
	for every $\{i_{1},\dots,i_{m}\} \subset \{1,\dots,n\}$. Also, since $K$ is bounded, for any
	$x_{0},x_{1} \in K$ there exists an $M>0$ such that $\|\overline{x_{0}-x_{1}}\|^{*}_{i_{1},\dots,i_{m}} \leq M$,
	for every $\{i_{1},\dots,i_{m}\} \subset \{1,\dots,n\}$. Then we have
	\begin{eqnarray*}
		\|\overline{x_{k}-x_{l}}\|^{*}_{i_{1},\dots,i_{m}}  & \leq &\left(C^{k} + \dots + C^{k+p-1}\right) M \\
		& = & \left(C^{k} + \dots + C^{l-1}\right) M,
	\end{eqnarray*}
	for every $\{i_{1},\dots,i_{m}\} \subset \{1,\dots,n\}$. Since $C \in (0,1)$, we have
	\[
	\lim_{k,l \to \infty} \|\overline{x_{k}-x_{l}}\|^{*}_{i_{1},\dots,i_{m}}=0,
	\]
	for every  $\{i_{1},\dots,i_{m}\} \subset \{1,\dots,n\}$, which means that $\{x_{k}\}$ is a Cauchy sequence.
	
	Moreover, since $X$ is complete with respect to the norms of class-$m$ collection and $K$ is closed then $x_{k} \to x$,
	with $x \in K$. Propositions \ref{p1} and \ref{p2} imply that  
	\[
	T(x)=\lim_{k \to \infty} T(x_{k})=\lim_{k \to \infty} x_{k+1} = x.
	\]
	Therefore, $T$ has a fixed point in $K$ with respect to the norms of class-$m$ collection.
	Next, we want to show the uniqueness of the fixed point with respect to the norms of class-$m$ collection.
	Assume that $x^{\prime} \in K$ is another fixed point of $T$. Because $T$ is a contractive
	mapping, there is a $C \in (0,1)$ such that
	\begin{eqnarray*}
		\|\overline{x-x^{\prime}}\|^{*}_{i_{1},\dots,i_{m}} &= \|\overline{T(x)-T(x^{\prime})}\|^{*}_{i_{1},\dots,i_{m}}\\
		& \leq C \|\overline{x-x^{\prime}}\|^{*}_{i_{1},\dots,i_{m}},
	\end{eqnarray*}
	for every  $\{i_{1},\dots,i_{m}\} \subset \{1,\dots,n\}$. This is true only for $\|\overline{x-x^{\prime}}
	\|^{*}_{i_{1},\dots,i_{m}}=0$, for every  $\{i_{1},\dots,i_{m}\} \subset \{1,\dots,n\}$. This means that
	$x=x^{\prime}$ or $T$ has a unique fixed point. Therefore, for any $m \in \{1,\dots,n\}$, a contractive
	mapping $T$ has a unique fixed point.
\end{proof}

\section{Concluding Remarks}

Let us consider the $p$-summable sequences $\ell^{p}$ (for $1\le p\le \infty$) containing all sequences
of real numbers $x=(x_{j})$ for which $\sum |x_{j}|^{p} < \infty$.
As in \cite{gun2}, one may equip this space with the following $n$-norm
\begin{equation}
\|x_{1},\dots,x_{n}\|_{p}:=\left[\frac{1}{n!} \sum_{j_{1}} \cdots \sum_{j_{n}} \left| \det
\left[\xi_{ij_{k}}\right]_{i,k} \right|^{p} \right]^{\frac{1}{p}},
\end{equation}
with $x_{i}=\left(\xi_{ij}\right) \in \ell^{p}, i = 1, \dots, n.$

Now, one can see that if $\{y_{1},\dots,y_{n}\}$ is a linearly independent set
in $\ell^{p}$, the norm of class-$n$ collection $\|\cdot\|^{*}_{1,\dots,n}$ can be writen as
\begin{equation}\label{normstar}
\|x\|^{*}_{p}:= \left[\sum _{\{i_{2},\dots,i_{m}\} \subseteq \{1, \dots,n\}}\|x,y_{i_{2}},\dots,
y_{i_{n}}\|_{p}^{p} \right]^{\frac{1}{p}}.
\end{equation}
This is precisely the norm that was used in \cite{eka} as a bridge to prove the fixed
point theorem on $(\ell^{p},\|\cdot, \dots, \cdot\|_p)$.

In this paper, we provide more alternatives to study $\ell^{p}$ as an $n$-normed space. Note that
the usual norm on $\ell^{p}$ is defined by
\begin{equation}
\|x\|_{p}=\left[\sum_{i=1}^{\infty} \left|x_{i}\right|^{p}\right]^{\frac{1}{p}}.
\end{equation}
Furthermore, we have some equivalence relations between norm $\|\cdot\|_{p}$ dan $\|\cdot\|_{p}^{*}$.

\begin{thm}\label{equivalent}{\rm \cite{eka}}
	Let $\{y_{1},\dots,y_{n}\}$ be a linearly independen set on $\ell^{p}$. Then
	the norm $\|\cdot\|_{p}^{*}$ defined in (\ref{normstar}) is equivalent to the usual norm
	$\|\cdot\|_{p}$ on $\ell^{p}$. Precisely, we have
	\begin{multline*}
	\frac{n \|y_{1},\dots,y_{n}\|_{p}}{(2n-1)\left[\|y_{1}\|_{p}+\dots+\|y_{n}\|_{p}\right]}\|x\|_{p} \leq
	\|x\|_{p}^{*} \\
	\leq  (n!)^{1-\frac{1}{p}}\left[\sum_{\{i_{2},\dots,i_{n}\}\subset \{1,\dots,n\}} \|y_{i_{2}}\|_{p}^{p}
	\dots \|y_{i_{n}}\|_{p}^{p} \right]^{\frac{1}{p}}\|x\|_{p},
	\end{multline*}
	for every $x \in \ell^{p}$.
\end{thm}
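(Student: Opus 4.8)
The plan is to prove the two inequalities in the displayed chain separately. The upper (right‑hand) estimate follows quickly from a submultiplicative bound for the $\ell^{p}$ $n$‑norm, while the lower (left‑hand) estimate is the substantive part.

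For the upper bound I would first record the auxiliary inequality
\[
\|x_1,\dots,x_n\|_p \le (n!)^{1-\frac1p}\,\|x_1\|_p\cdots\|x_n\|_p ,\qquad x_1,\dots,x_n\in\ell^{p}.
\]
Expanding the determinant $\det[\xi_{ij_k}]_{i,k}$ as a signed sum of its $n!$ elementary products and applying the convexity inequality $\big|\sum_{\sigma\in S_n}a_\sigma\big|^{p}\le (n!)^{p-1}\sum_{\sigma\in S_n}|a_\sigma|^{p}$, then summing over $j_1,\dots,j_n$ and using Tonelli to factor each resulting multiple sum (after re‑indexing by $\sigma$) as $\prod_i\|x_i\|_p^{p}$, one obtains $\sum_{j_1,\dots,j_n}|\det[\xi_{ij_k}]|^{p}\le (n!)^{p}\prod_i\|x_i\|_p^{p}$; dividing by $n!$ and taking $p$‑th roots gives the estimate. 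Inserting $x_1=x$ and $\{x_2,\dots,x_n\}=\{y_{i_2},\dots,y_{i_n}\}$ into each summand of (\ref{normstar}), factoring out $(n!)^{p-1}\|x\|_p^{p}$, and summing over all $(n-1)$‑element subsets of $\{1,\dots,n\}$ reproduces the stated upper bound.

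For the lower bound the aim is $\|x\|_p\le \dfrac{(2n-1)\big[\|y_1\|_p+\dots+\|y_n\|_p\big]}{n\,\|y_1,\dots,y_n\|_p}\,\|x\|_p^{*}$. I would first drop the $p$‑averaging via $\|x\|_p^{*}\ge\max_{1\le i\le n}\|x,y_1,\dots,\widehat{y_i},\dots,y_n\|_p$, so it suffices to prove
\[
n\,\|x\|_p\,\|y_1,\dots,y_n\|_p \le (2n-1)\big[\|y_1\|_p+\dots+\|y_n\|_p\big]\max_{1\le i\le n}\|x,y_1,\dots,\widehat{y_i},\dots,y_n\|_p .
\]
Write $D:=\|y_1,\dots,y_n\|_p$, $a_i:=\|x,y_1,\dots,\widehat{y_i},\dots,y_n\|_p$ and $V:=\operatorname{span}\{y_1,\dots,y_n\}$. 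I would fix $\varepsilon>0$, choose $v=\sum_{k}\mu_k y_k\in V$ with $\|x-v\|_p\le\operatorname{dist}(x,V)+\varepsilon$, and split $\|x\|_p\le\|x-v\|_p+\sum_k|\mu_k|\,\|y_k\|_p$. For each $i$ the multilinearity/alternating identity $a_i=\|\mu_i y_i+(x-v),y_1,\dots,\widehat{y_i},\dots,y_n\|_p$ together with the triangle inequality for the $n$‑norm gives $|\mu_i|\,D\le a_i+\|x-v,y_1,\dots,\widehat{y_i},\dots,y_n\|_p$, while $\operatorname{dist}(x,V)$ is bounded using $\operatorname{dist}(x,V)\le\operatorname{dist}(x,\operatorname{span}\{y_j:j\ne i\})$ and the comparison between this distance, the $n$‑norm $a_i$, and a suitable $(n-1)$‑dimensional "content" of $\{y_j:j\ne i\}$, which is comparable to $D/\|y_i\|_p$. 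Feeding these estimates into the split, letting $\varepsilon\to0$ and optimizing over $i$, one extracts the constant $\frac{2n-1}{n}$, the "$2n-1$" arising by adding the contribution of the part of $x$ lying in $V$ to that of its transversal part; the fact that $\bigcap_{i=1}^{n}\operatorname{span}\{y_j:j\ne i\}=\{0\}$ (by linear independence) is what makes the transversal part controllable at all.

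The main obstacle is exactly this last step of the lower bound: comparing the $\ell^{p}$‑distance of $x$ to the subspaces $\operatorname{span}\{y_j:j\ne i\}$ with the leave‑one‑out $n$‑norms, and bookkeeping the constants so that they collapse to the sharp value $\frac{2n-1}{n}$. The crude route via the submultiplicative estimate $\|z,z_1,\dots,z_{n-1}\|_p\le (n!)^{1-\frac1p}\|z\|_p\prod\|z_j\|_p$ is too wasteful to give this clean constant, so the comparison has to be carried out by applying the triangle inequality for the $n$‑norm directionwise along $y_1,\dots,y_n$ and tracking cancellations carefully.
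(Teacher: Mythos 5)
First, a point of reference: the paper does not prove this theorem at all --- it is imported verbatim from \cite{eka} --- so there is no in-paper argument to compare yours against, and your attempt has to stand on its own. Your upper bound does: the estimate $\|x_{1},\dots,x_{n}\|_{p}\le (n!)^{1-\frac1p}\|x_{1}\|_{p}\cdots\|x_{n}\|_{p}$ follows exactly as you say from the permutation expansion of $\det[\xi_{ij_k}]$, the convexity inequality, and Tonelli, and inserting it into each of the $n$ summands of (\ref{normstar}) gives the right-hand inequality with the stated constant. That half is complete.

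The left-hand inequality, however, is not proved, and the missing piece is precisely the step you label ``the main obstacle.'' Your plan hinges on a quantitative three-way comparison between $\operatorname{dist}_{p}(x,\operatorname{span}\{y_{j}:j\ne i\})$, the leave-one-out norms $a_{i}=\|x,y_{1},\dots,\widehat{y_{i}},\dots,y_{n}\|_{p}$, and an $(n-1)$-dimensional ``content'' of $\{y_{j}:j\ne i\}$. For $p=2$ this is the Gram-determinant identity $\|x,z_{2},\dots,z_{n}\|_{2}=\operatorname{dist}(x,\operatorname{span}\{z_{j}\})\cdot\|z_{2},\dots,z_{n}\|_{2}$, but for general $p$ no such factorization exists; a substitute with explicit constants is the actual content of the theorem, and you neither state one precisely nor prove it. Two concrete defects: (i) the claimed comparability of the $(n-1)$-content of $\{y_{j}:j\ne i\}$ with $D/\|y_{i}\|_{p}$ is false in one direction (let $y_{i}$ approach $\operatorname{span}\{y_{j}:j\ne i\}$ while remaining independent: $D\to 0$ but the content and $\|y_{i}\|_{p}$ stay bounded below), so you must at least specify which one-sided inequality you use and justify it for all $p$; (ii) every comparison available for general $p$ carries a constant such as $(n!)^{1-\frac1p}$, and you give no computation showing that these cancel to produce exactly $\frac{2n-1}{n}$ --- the sentence ``one extracts the constant'' is an assertion, not a derivation, and on its face your route would yield a worse, $p$-dependent constant. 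The natural explicit target, which is what the shape of the statement is telling you to prove, is the single inequality $n\,\|x\|_{p}\,\|y_{1},\dots,y_{n}\|_{p}\le (2n-1)\sum_{i=1}^{n}\|y_{i}\|_{p}\,a_{i}$, from which the left-hand estimate follows at once via $a_{i}\le\|x\|_{p}^{*}$; your sketch never arrives at this or any other explicit intermediate inequality, so the lower bound remains unestablished.
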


Since $(\ell^{p},\|\cdot\|_p)$ is a Banach space, by Theorem \ref{equivalent} we have the following corollary.
\begin{cor}\label{cor}{\rm \cite{eka}}
	The normed space $(\ell^{p},\|\cdot\|_{p}^{*})$ is a Banach space.
\end{cor}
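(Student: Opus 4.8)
The plan is to deduce completeness of $(\ell^{p},\|\cdot\|_{p}^{*})$ directly from Theorem~\ref{equivalent} together with the classical fact that $(\ell^{p},\|\cdot\|_{p})$ is a Banach space, using only the standard principle that two equivalent norms on a vector space share the same Cauchy sequences, the same convergent sequences, and the same limits.

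First I would record the two constants appearing in Theorem~\ref{equivalent}. Set
\[
A:=\frac{n\,\|y_{1},\dots,y_{n}\|_{p}}{(2n-1)\bigl[\|y_{1}\|_{p}+\dots+\|y_{n}\|_{p}\bigr]},
\qquad
B:=(n!)^{1-\frac1p}\Bigl[\sum_{\{i_{2},\dots,i_{n}\}\subset\{1,\dots,n\}}\|y_{i_{2}}\|_{p}^{p}\cdots\|y_{i_{n}}\|_{p}^{p}\Bigr]^{\frac1p}.
\]
Because $\{y_{1},\dots,y_{n}\}$ is linearly independent, axiom (i) of the $n$-norm gives $\|y_{1},\dots,y_{n}\|_{p}>0$, so $A>0$; and since each $y_{i}\in\ell^{p}$ has $\|y_{i}\|_{p}<\infty$, we have $B<\infty$. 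Hence $0<A\le B<\infty$, and Theorem~\ref{equivalent} reads $A\|x\|_{p}\le\|x\|_{p}^{*}\le B\|x\|_{p}$ for every $x\in\ell^{p}$.

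Next I would take an arbitrary Cauchy sequence $\{x_{k}\}$ in $(\ell^{p},\|\cdot\|_{p}^{*})$. The lower estimate $A\|x_{k}-x_{l}\|_{p}\le\|x_{k}-x_{l}\|_{p}^{*}$ shows that $\{x_{k}\}$ is Cauchy in $(\ell^{p},\|\cdot\|_{p})$; since the latter space is complete, there is $x\in\ell^{p}$ with $\|x_{k}-x\|_{p}\to 0$. The upper estimate $\|x_{k}-x\|_{p}^{*}\le B\|x_{k}-x\|_{p}$ then forces $\|x_{k}-x\|_{p}^{*}\to 0$, so $\{x_{k}\}$ converges in $(\ell^{p},\|\cdot\|_{p}^{*})$. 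Therefore every Cauchy sequence converges and $(\ell^{p},\|\cdot\|_{p}^{*})$ is a Banach space.

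There is essentially no hard step here: the entire content is packaged into Theorem~\ref{equivalent}, and the only points requiring a word of care are that the left-hand constant is strictly positive (which is precisely where linear independence of $\{y_{1},\dots,y_{n}\}$ is used) and that the right-hand constant is finite (which uses $y_{i}\in\ell^{p}$). One could alternatively argue abstractly that completeness is a bi-Lipschitz invariant of the induced metric and hence is preserved under passage to an equivalent norm, but spelling out the Cauchy-sequence transfer as above keeps the argument self-contained.
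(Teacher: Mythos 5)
Your proof is correct and follows exactly the route the paper intends: the corollary is stated as an immediate consequence of the norm equivalence in Theorem~\ref{equivalent} together with the completeness of $(\ell^{p},\|\cdot\|_{p})$, which is precisely the Cauchy-sequence transfer you spell out. The only difference is that you make the constants and the two-sided estimates explicit, which the paper leaves implicit.
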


With our approach, we have the proposition below.

\begin{prop}\label{prop1}
	Let $\{y_{1},\dots,y_{n}\}$ be a linearly independent set on $\ell^{p}$. Then the norm of class
	$n$-collection which is defined in (\ref{e05}) is equivalent with the norm $\|\cdot\|_{p}^{*}$
	in (\ref{normstar}). Precisely we have
	$$\|\cdot\|_{p}^{*}\leq \|\cdot\|_{1,\dots,n}^{*}\leq (n)^{1-\frac{1}{p}} \|\cdot\|_{p}^{*}.$$
\end{prop}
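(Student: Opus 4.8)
The plan is to notice that, for a fixed $x \in \ell^p$, both quantities being compared are norms of one and the same vector in $\mathbb{R}^n$, so that the proposition reduces to the classical inequalities between the $\ell^1$- and $\ell^p$-norms on $\mathbb{R}^n$.

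First I would unwind the two definitions on a common footing. For $k = 1,\dots,n$ put
\[
a_k := \|x,y_1,\dots,y_{k-1},y_{k+1},\dots,y_n\|_p \ge 0,
\]
i.e.\ $a_k$ is the value of the $\ell^p$ $n$-norm on $x$ together with all the $y_j$'s except $y_k$. Since for $m = n$ the only admissible index set in (\ref{e05}) is $\{1,\dots,n\}$, that formula becomes $\|\overline{x}\|^*_{1,\dots,n} = \sum_{k=1}^n a_k$. On the other hand, the sum in (\ref{normstar}) runs over the $n$ subsets of $\{1,\dots,n\}$ of size $n-1$, and each such subset is the complement of exactly one index $k$, so $\|x\|_p^* = \bigl(\sum_{k=1}^n a_k^p\bigr)^{1/p}$. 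Writing $\mathbf{a} = (a_1,\dots,a_n)$, the assertion is therefore exactly $\|\mathbf{a}\|_p \le \|\mathbf{a}\|_1 \le n^{1-1/p}\|\mathbf{a}\|_p$.

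The left inequality is the elementary estimate $\bigl(\sum_k b_k^p\bigr)^{1/p} \le \sum_k b_k$, valid for nonnegative reals and $p \ge 1$: normalising so that $\sum_k b_k = 1$ we have $b_k \le 1$, hence $b_k^p \le b_k$, and summation gives the claim. The right inequality is Hölder's inequality with conjugate exponents $p$ and $q = p/(p-1)$:
\[
\sum_{k=1}^n a_k = \sum_{k=1}^n a_k\cdot 1 \le \left(\sum_{k=1}^n a_k^p\right)^{1/p}\left(\sum_{k=1}^n 1\right)^{1/q} = n^{1-1/p}\,\|x\|_p^*.
\]
When $p = \infty$ one argues directly: $\|x\|_\infty^* = \max_k a_k$ and $\|\overline{x}\|^*_{1,\dots,n} = \sum_k a_k$, so $\max_k a_k \le \sum_k a_k \le n\max_k a_k$, which is the claimed bound since $n^{1-1/p} = n$ for $p = \infty$. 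Combining these inequalities for every $x \in \ell^p$ yields the stated norm equivalence.

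I do not anticipate a genuine obstacle. The only steps that need care are the bookkeeping that reconciles the index notation of (\ref{normstar}) with that of (\ref{e05}) --- verifying that both norms are built from the very same $n$ numbers $a_1,\dots,a_n$ --- and, for completeness, the separate treatment of the case $p = \infty$; after that the argument is just the standard $\ell^1$--$\ell^p$ comparison in dimension $n$.
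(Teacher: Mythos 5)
Your proof is correct and follows essentially the same route as the paper's: both reduce the claim to comparing the $\ell^1$- and $\ell^p$-norms of the vector $(a_1,\dots,a_n)$ whose entries are the $n$ values $\|x,y_1,\dots,y_{k-1},y_{k+1},\dots,y_n\|_p$, using the elementary inequality $\left(\sum_k a_k^p\right)^{1/p}\le\sum_k a_k$ for the lower bound and H\"older's inequality for the upper bound. Your explicit bookkeeping reconciling the index conventions of (\ref{e05}) and (\ref{normstar}), and the separate remark on $p=\infty$, are tidier than the paper's write-up but do not change the argument.
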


\begin{proof} Consider the $n$-normed space $(\ell^{p},\|\cdot,\dots,\cdot\|_{p})$ and
	$\{y_{1},\dots,y_{n}\}$ be a linearly independent set on $\ell^{p}$. For any $x \in X$, recall the norm of class-$n$
	collection is
	\[
	\|x\|= \sum_{\{i_{2},\dots,i_{n}\}\subset \{1,\dots,n\}} \|x, y_{i_{2}},\dots,i_{n}\|_{p}^{*},
	\]
	and
	\[
	\|x\|_{p}^{*}= \left(\sum_{\{i_{2},\dots,i_{n}\}\subset \{1,\dots,n\}} \|x, y_{i_{2}},\dots,i_{n}\|_{p}^{p}\right)^{\frac{1}{p}}.
	\]
	Then we have
	\begin{eqnarray*}
		\|x\|_{p}^{*}&=& \left(\sum_{\{i_{2},\dots,i_{n}\}\subset \{1,\dots,n\}} \|x, y_{i_{2}},\dots,i_{n}\|_{p}^{p}\right)^{\frac{1}{p}}\\
		&\leq & \sum_{\{i_{2},\dots,i_{n}\}\subset \{1,\dots,n\}} \|x, y_{i_{2}},\dots,i_{n}\|_{p} \\
		& = & \|x\|_{1,\dots,n}^{*}.
	\end{eqnarray*}
	From Theorem (\ref{equivalent}) and by using H\"older  inequality we have
	\begin{eqnarray*}
		\|x\|_{1,\dots,n}^{*}&=& \sum_{\{i_{2},\dots,i_{n}\}\subset \{1,\dots,n\}} \|x, y_{i_{2}},\dots,i_{n}\|_{p}^{p}\\
		&\leq & (n)^{1-\frac{1}{p}} \left(\sum_{\{i_{2},\dots,i_{n}\}\subset \{1,\dots,n\}} \|x, y_{i_{2}},\dots,i_{n}\|_{p}^{p}\right)^{\frac{1}{p}}\\
		& = & (n)^{1-\frac{1}{p}} \, \, \|x\|_{p}^{*}.
	\end{eqnarray*}
	Then we have the inequality we want.
\end{proof}

Note that, we write $\|x\|_{1,\dots,n}^{*}$ instead of $\|\overline{x}\|_{1,\dots,n}^{*}$,
because the element of class-$n$ collection of an $n$-normed space is the $n$-normed space itself.
One might notice that the formula of the norm of class-$n$ collection is simpler than the norm
$\|\cdot\|_{p}^{*}$ that Ekariani \textit{et al} used and both norms are equivalent. Since
the norm $\|\cdot\|_{p}^{*}$ equivalent to the usual norm $\|\cdot\|_{p}^{*}$ on $\ell^{p}$,
we have the following corollaries.

\begin{cor}
	Let $\{y_{1},\dots,y_{n}\}$ be a linearly independent set on $\ell^{p}$. Then the norm
	$\|\cdot\|_{1,\dots,n}^{*}$ is equivalent to the usual norm $\|\cdot\|_{p}$ on $\ell^{p}$.
	Precisely we have
	\begin{multline*}
	\frac{n \|y_{1},\dots,y_{n}\|_{p}}{(2n-1)\left[\|y_{1}\|_{p}+\dots+\|y_{n}\|_{p}\right]}\|x\|_{p} \leq
	\|x\|_{p}^{*} \\
	\leq  (n \cdot n!)^{1-\frac{1}{p}}\left[\sum_{\{i_{2},\dots,i_{n}\}\subset \{1,\dots,n\}} \|y_{i_{2}}\|_{p}^{p}
	\dots \|y_{i_{n}}\|_{p}^{p} \right]^{\frac{1}{p}}\|x\|_{p},
	\end{multline*}
\end{cor}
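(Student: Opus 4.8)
The plan is to obtain the claimed two-sided estimate simply by chaining the two equivalences already available: Theorem~\ref{equivalent}, which compares $\|\cdot\|_{p}^{*}$ with the usual norm $\|\cdot\|_{p}$, and Proposition~\ref{prop1}, which compares $\|\cdot\|_{1,\dots,n}^{*}$ with $\|\cdot\|_{p}^{*}$. Since norm equivalence is transitive, the only real work is to keep track of how the multiplicative constants combine. First I would establish the lower bound: Proposition~\ref{prop1} gives $\|x\|_{p}^{*} \leq \|x\|_{1,\dots,n}^{*}$ for every $x \in \ell^{p}$, and the left-hand inequality of Theorem~\ref{equivalent} gives
\[
\frac{n\,\|y_{1},\dots,y_{n}\|_{p}}{(2n-1)\left[\|y_{1}\|_{p}+\dots+\|y_{n}\|_{p}\right]}\,\|x\|_{p} \leq \|x\|_{p}^{*};
\]
concatenating these two inequalities yields the lower estimate for $\|x\|_{1,\dots,n}^{*}$ with exactly the constant stated.

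Next I would handle the upper bound. Proposition~\ref{prop1} gives $\|x\|_{1,\dots,n}^{*} \leq n^{1-\frac{1}{p}}\,\|x\|_{p}^{*}$, while the right-hand inequality of Theorem~\ref{equivalent} gives
\[
\|x\|_{p}^{*} \leq (n!)^{1-\frac{1}{p}}\left[\sum_{\{i_{2},\dots,i_{n}\}\subset\{1,\dots,n\}}\|y_{i_{2}}\|_{p}^{p}\cdots\|y_{i_{n}}\|_{p}^{p}\right]^{\frac{1}{p}}\|x\|_{p}.
\]
Multiplying these two estimates and using $n^{1-\frac{1}{p}}\cdot (n!)^{1-\frac{1}{p}} = (n\cdot n!)^{1-\frac{1}{p}}$ produces the upper estimate exactly as stated, which completes the argument.

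I do not expect any genuine obstacle here: the corollary is a routine concatenation of Theorem~\ref{equivalent} and Proposition~\ref{prop1}. The only points that require a little care are bookkeeping ones — in particular recognizing that the extra factor $n^{1-\frac{1}{p}}$ coming from Proposition~\ref{prop1} merges with the factor $(n!)^{1-\frac{1}{p}}$ from Theorem~\ref{equivalent} to give the single constant $(n\cdot n!)^{1-\frac{1}{p}}$. One should also recall, as in the remark following Proposition~\ref{prop1}, that on the class-$n$ collection the coset $\overline{x}$ is identified with $x$ itself, so the quantity written $\|\overline{x}\|_{1,\dots,n}^{*}$ is the same as $\|x\|_{1,\dots,n}^{*}$ and no quotient subtlety intervenes.
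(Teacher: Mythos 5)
Your proof is correct and is exactly the argument the paper intends (the paper leaves the proof implicit, introducing the corollary with ``Since the norm $\|\cdot\|_{p}^{*}$ [is] equivalent to the usual norm\dots''): one chains Proposition~\ref{prop1} with Theorem~\ref{equivalent} and merges the constants $n^{1-\frac{1}{p}}$ and $(n!)^{1-\frac{1}{p}}$ into $(n\cdot n!)^{1-\frac{1}{p}}$. You also correctly read the middle term of the displayed inequality as $\|x\|_{1,\dots,n}^{*}$ rather than the literal $\|x\|_{p}^{*}$, which is evidently a typographical slip in the statement.
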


\begin{cor}\label{cor-n}
	The normed space $(\ell^{p},\|\cdot\|_{1,\dots,n}^{*})$ is a Banach space.
\end{cor}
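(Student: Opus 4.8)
The plan is to obtain completeness purely from the norm equivalence established just above, invoking the standard principle that two equivalent norms on a vector space have the same Cauchy sequences and the same convergent sequences, so that one is complete if and only if the other is. Concretely, I would start from Corollary \ref{cor}, which says $(\ell^{p},\|\cdot\|_{p}^{*})$ is a Banach space, together with Proposition \ref{prop1}, which provides the two-sided bound
\[
\|x\|_{p}^{*}\le \|x\|_{1,\dots,n}^{*}\le n^{1-\frac1p}\,\|x\|_{p}^{*}
\qquad (x\in\ell^{p}).
\]

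Then I would take an arbitrary sequence $\{x_{k}\}\subset\ell^{p}$ that is Cauchy with respect to $\|\cdot\|_{1,\dots,n}^{*}$. The left-hand inequality applied to the differences $x_{k}-x_{l}$ shows $\{x_{k}\}$ is Cauchy with respect to $\|\cdot\|_{p}^{*}$, so by Corollary \ref{cor} there is $x\in\ell^{p}$ with $\|x_{k}-x\|_{p}^{*}\to 0$. The right-hand inequality, applied to $x_{k}-x$, then forces $\|x_{k}-x\|_{1,\dots,n}^{*}\to 0$. Hence every $\|\cdot\|_{1,\dots,n}^{*}$-Cauchy sequence converges in $(\ell^{p},\|\cdot\|_{1,\dots,n}^{*})$, which is exactly the claim. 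Alternatively, one may bypass Corollary \ref{cor} and run the identical argument using the preceding corollary (which makes $\|\cdot\|_{1,\dots,n}^{*}$ equivalent to the usual norm $\|\cdot\|_{p}$) together with the classical completeness of $(\ell^{p},\|\cdot\|_{p})$.

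There is essentially no obstacle here; the proof is just an instance of the "completeness is invariant under norm equivalence" lemma. The only points that deserve a moment's care are that both sides of the bound in Proposition \ref{prop1} are genuinely used — the lower bound to transfer Cauchyness down to $\|\cdot\|_{p}^{*}$, and the upper bound to transfer convergence back up to $\|\cdot\|_{1,\dots,n}^{*}$ — and that the constant $n^{1-1/p}$ is the one coming from the Hölder step over a sum of $\binom{n}{1}=n$ terms, matching the indexing in \eqref{normstar} and \eqref{e05}.
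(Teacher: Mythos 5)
Your proof is correct and matches the paper's (implicit) argument: the corollary is deduced from the norm equivalence in Proposition \ref{prop1} together with the completeness of $(\ell^{p},\|\cdot\|_{p}^{*})$ from Corollary \ref{cor}, via the standard fact that completeness is preserved under equivalence of norms. The details you supply (lower bound to transfer Cauchyness, upper bound to transfer convergence) are exactly the ones the paper leaves to the reader.
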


\noindent Now, we present a more general fixed point theorem on $(\ell^{p},\|\cdot,\dots,\cdot\|_p)$ as follows.

\begin{thm}\label{fpl}
	Consider the $n$-normed space $(\ell^{p},\|\cdot,\dots,\cdot\|_{p})$ and let $K\subseteq \ell^{p}$ be nonempty,
	closed and bounded. If $T:K\to K$ is a contractive mapping with respect to norms of class-$m$ collection
	for an $m \in \{1,\dots,n\}$, then $T$ has a unique fixed point.
\end{thm}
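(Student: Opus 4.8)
The plan is to reduce Theorem~\ref{fpl} to the abstract fixed point theorem, Theorem~\ref{FPt}, by checking that the $n$-normed space $(\ell^p,\|\cdot,\dots,\cdot\|_p)$ satisfies all the hypotheses required there. The only nontrivial hypothesis is completeness: Theorem~\ref{FPt} asks that $X$ be complete with respect to the norms of class-$m$ collection. By the Remark following Theorem~\ref{comp} (and Theorem~\ref{comp} itself), completeness with respect to class-$m$ is equivalent to completeness with respect to class-$1$, and more generally to class-$n$; so it suffices to verify completeness for a single convenient value. I would use class-$n$: by Corollary~\ref{cor-n}, $(\ell^p,\|\cdot\|_{1,\dots,n}^{*})$ is a Banach space, i.e. $\ell^p$ is complete with respect to the norm of class-$n$ collection. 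Hence, by Theorem~\ref{comp}, $\ell^p$ is complete with respect to the norms of class-$m$ collection for every $m\in\{1,\dots,n\}$.

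The remaining steps are then immediate bookkeeping. First I would fix the linearly independent set $\{y_1,\dots,y_n\}\subset\ell^p$ that is implicit in the construction of all the quotient-space norms, and fix the given $m\in\{1,\dots,n\}$. Next I would record that $K\subseteq\ell^p$ is nonempty, closed and bounded by hypothesis (these notions being independent of the class, by Theorem~\ref{t1.5} and the Remark after Theorem~\ref{comp}), and that $T:K\to K$ is contractive with respect to the norms of class-$m$ collection, again by hypothesis. Then I would invoke Theorem~\ref{FPt} with $X=\ell^p$ and this $K$, $m$, $T$: all four hypotheses (ambient space an $n$-normed space, $K$ nonempty closed and bounded, $X$ complete, $T$ contractive with respect to class-$m$) are now in place, so $T$ has a unique fixed point in $K$. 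This completes the proof.

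The only point that needs a word of care is that Theorem~\ref{FPt} is stated for a contractive self-map of $K$ with the ambient space $X$ complete, and here $X=\ell^p$ whereas $K$ need not be a subspace; but this is exactly the setting of Theorem~\ref{FPt}, whose proof only uses closedness of $K$ inside the complete space $X$ to locate the limit of the Picard iterates. So there is no real obstacle; the substantive content has already been extracted into Corollary~\ref{cor-n} (completeness for the class-$n$ norm, which descends from the Banach space structure of $(\ell^p,\|\cdot\|_p)$ via the equivalences in Theorem~\ref{equivalent} and Proposition~\ref{prop1}) and Theorem~\ref{comp} (transfer of completeness between classes). If I wanted to make the argument self-contained for $m=1$ directly, the slightly delicate step would be to note that a Cauchy sequence in $K$ with respect to the class-$1$ norms is Cauchy with respect to the class-$n$ norm (Theorem~\ref{mn}-type reasoning, or Corollary~\ref{ceq}), hence convergent in $\ell^p$ by Corollary~\ref{cor-n}, with limit in $K$ by closedness; but invoking Theorem~\ref{comp} and Theorem~\ref{FPt} as black boxes avoids repeating any of this.
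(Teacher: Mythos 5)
Your proposal is correct. It is, in fact, precisely the alternative route that the paper itself records in the remark immediately following Theorem~\ref{fpl}: keep the given $m$ fixed, obtain completeness of $\ell^p$ with respect to the norms of class-$m$ collection from Corollary~\ref{cor-n} together with the transfer result Theorem~\ref{comp}, and then apply Theorem~\ref{FPt} directly. The paper's printed proof makes the opposite transfer: instead of moving the \emph{completeness} from class-$n$ down to class-$m$, it moves the \emph{contraction} from class-$m$ up to class-$n$ via Theorem~\ref{mn}, and then works entirely at the class-$n$ level where Corollary~\ref{cor-n} gives completeness outright (with Theorem~\ref{FPt} invoked implicitly for $m=n$). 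The two arguments use the same essential ingredients (Corollary~\ref{cor-n} and Theorem~\ref{FPt}) and differ only in which equivalence lemma carries the reduction; your version has the mild advantage of not needing Theorem~\ref{mn} at all, while the paper's version avoids the double application of Theorem~\ref{comp} (class-$n$ to class-$1$ to class-$m$). Your closing observation that Theorem~\ref{FPt} only needs $K$ closed in the complete ambient space, not $K$ itself a subspace, is accurate and worth the sentence you give it.
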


\begin{proof} Fix an $m \in \{1,\dots,n\}$ and let $T:K\to K$ be a contractive mapping with
	respect to norms of class-$m$ collection on $\ell^{p}$. By Theorem \ref{mn}, $T$ is contractive with
	respect to the norm of class-$n$ collection on $\ell^{p}$. By Corollary (\ref{cor-n}) $\ell^{p}$ is complete
	with respect to the norm of class-$n$ collection. Hence, $T$ has a unique fixed point. 
\end{proof}

\begin{rem}
	We can also prove Theorem \ref{fpl} using Theorem \ref{FPt}, noting the fact that $\ell^{p}$ is
	complete with respect to the norms of class-$m$ collection (see Theorem \ref{comp}).
\end{rem}

\subsection*{Acknowledgment}
This work is supported by ITB Research and Innovation Program 2019. 
The first author is also supported by LPDP Indonesia.

\end{document}